\journal{Computer Methods in Applied Mechanics and Engineering}
\definecolor{darkred}{rgb}{0.82,0.15,0.20}
\definecolor{darkblue}{rgb}{0.82,0.15,0.12}
\definecolor{lightblue}{rgb}{0.22,0.45,0.70}
\definecolor{lightgreen}{rgb}{0.1,0.6,0.10}
\numberwithin{equation}{section}
\numberwithin{figure}{section}
\numberwithin{table}{section}
\newcommand\bd{\boldsymbol{d}}
\newcommand\bu{\boldsymbol{u}}
\newcommand\bv{\boldsymbol{v}}
\newcommand\bw{\boldsymbol{w}}
\newcommand\bx{\boldsymbol{x}}
\newcommand\bz{\boldsymbol{z}}
\newcommand\nn{\boldsymbol{n}}
\renewcommand\gg{\boldsymbol{g}}
\newcommand\ff{\boldsymbol{f}}
\newcommand\bt{\boldsymbol{t}}
\newcommand\bsigma{\boldsymbol{\sigma}}
\newcommand\beps{\boldsymbol{\epsilon}}
\newcommand\bI{\mathbf{I}}
\newcommand\bH{\mathbf{H}}
\newcommand\bL{\mathbf{L}}
\newcommand\bV{\mathbf{V}}
\newcommand\bW{\mathbf{W}}
\newcommand\cT{\mathcal{T}}
\newcommand\cA{\mathcal{A}}
\newcommand\cE{\mathcal{E}}
\newcommand\cR{\mathcal{R}}
\newcommand\cRC{\cR_C}
\newcommand\cRD{\cR_D}
\newcommand\cRF{\cR}
\newcommand\bdiv{\mathop{\mathbf{div}}\nolimits}
\newcommand\vdiv{\mathop{\mathrm{div}}\nolimits}
\newcommand\cero{\boldsymbol{0}}
\newcommand{\jump}[1]{\ensuremath{[\![#1]\!]} }
\newtheorem{theorem}{Theorem}[section]
\newtheorem{lemma}{Lemma}[section]
\theoremstyle{remark}
\theoremstyle{example}
\newtheorem{example}{Example}[section]
\begin{document}
\hypersetup{
		linkcolor=lightblue,
		urlcolor=darkred,
		citecolor=lightgreen
	}
\begin{frontmatter}

\title{Parameter-robust methods for the Biot-Stokes interfacial coupling without Lagrange multipliers}

\author[kth]{Wietse M. Boon\fnref{wmb}}
\ead{wietse@kth.se}
\author[uio]{Martin Hornkj{\o}l}
\ead{marhorn@math.uio.no}
\author[simula]{Miroslav Kuchta\corref{cor1}\fnref{mk}}
\ead{miroslav@simula.no}
\author[uio,simula]{Kent-Andr\'e Mardal\fnref{kam}}
\ead{kent-and@math.uio.no}
\author[monash,sechenov]{Ricardo Ruiz-Baier\fnref{rrb}}
\ead{ricardo.ruizbaier@monash.edu}

\address[kth]{KTH Royal Institute of Technology, Lindstedtsv\"agen 25, 114 28, Stockholm, Sweden}
\address[uio]{Department of Mathematics,  University of Oslo, Norway}
\address[simula]{Simula Research Laboratory, 0164 Oslo, Norway}
\address[monash]{School of Mathematics, 
Monash University, 9 Rainforest Walk, Melbourne 3800 VIC,  Australia} 
\address[sechenov]{Institute of Computer Science and Mathematical Modelling, Sechenov University, Moscow, Russian Federation} 

\cortext[cor1]{Corresponding author}

\fntext[wmb]{WMB was supported by the Dahlquist Research Fellowship.}
\fntext[mk]{MK acknowledges support from the Research Council of Norway (NFR) grant 303362.}
\fntext[kam]{KAM acknowledges support from the Research Council of Norway, grant 300305 and 
	301013.}
\fntext[rrb]{ RRB acknowledges support from the Monash Mathematics Research Fund S05802-3951284
        and from the Ministry of Science and Higher Education of the Russian Federation within the
        framework of state support for the creation and development of World-Class Research Centers
        ``Digital biodesign and personalised healthcare" No. 075-15-2020-926.}

\begin{abstract}
In this paper we advance the analysis of discretizations for a fluid-structure interaction model of the monolithic coupling between the free flow of a viscous Newtonian fluid and a deformable porous medium separated by an interface. A five-field mixed-primal finite element scheme is proposed solving for Stokes velocity-pressure and Biot displacement-total pressure-fluid pressure. Adequate inf-sup conditions are derived,  and one of the distinctive features of the formulation is that its stability is  established robustly in all material parameters. 
We propose  robust preconditioners for this perturbed saddle-point problem using appropriately weighted operators in fractional Sobolev and metric spaces at the interface. The performance is  corroborated by several test cases, including the application to  interfacial flow in the brain. 
\end{abstract}

\begin{keyword} Transmission problem\sep Biot-Stokes coupling\sep total pressure\sep mixed finite elements\sep operator preconditioning\sep brain poromechanics. 
\MSC{
 65N60, 76S05, 74F10, 92C35.
  }
\end{keyword}

\end{frontmatter}

\section{Introduction}\label{sec:intro}
\subsection{Scope} 
We address the construction of appropriate monolithic solvers for multiphysics fluid-poromechanical couplings interacting through 
an interface. Particular attention is payed to tracking parameter dependence of the continuous and discrete formulations so that the resulting numerical methods are robust 
with respect to typical scales in material constants spanning over many orders of magnitude. We adopt a multi-domain approach, where appropriate conditions for the 
coupling through the shared interface need to be imposed. We use the conditions proposed in \cite{murad01} (although, other
forms and dedicated phenomena could be incorporated without much effort, such as fluid entry resistance
\cite{showalter05,bergkamp20}). The recent literature contains various numerical methods for (Navier-)Stokes/Biot
interface formulations including mixed, double mixed, monolithic, segregated, conforming, non-conforming, and DG discretizations 
\cite{ager19,ambar18,badia09,bukac15,caucao21,cesme17,cesme20,li20,taffetani20,wen20,wen21,wilfrid20}. 

In \cite{taffetani20,ruiz21} (and starting from the Biot-Stokes equations 
advanced in \cite{ambar18,ambar20}) the authors rewrite the poroelasticity equations using displacement, fluid pressure and total pressure (also as in the poromechanics formulations from  \cite{buerger21,kumar20,oyarzua16}).  
Since fluid pressure in the poroelastic domain 
has sufficient regularity, no Lagrange multipliers are needed to enforce the coupling conditions, which  resembles the different formulations for Stokes-Darcy advanced in \cite{boon21SD,chidyagwai2016constraint,holter20,karper09}. Another advantage 
of the three-field Biot formulation is its robustness with respect to the Lam\'e constants 
of the poroelastic structure. This robustness is of particular importance when we test the flow response to changes in the 
material properties of the skeleton and when the solid is nearly incompressible. 
The work \cite{taffetani20} focuses on the stability analysis and its precise implications on the asymptotics of the interface conditions  when the permeability depends on porosity heterogeneity, whereas \cite{ruiz21} addresses the stability of the semi- and fully discrete problems, and the application to interfacial flow in the eye.  Here, we extend these works by concentrating on deriving robust stability, on designing efficient block preconditioners (robust with respect to all material parameters) following the general theoretical formalism from \cite{mardal-winther}, and on the simulation of free flow interacting with interstitial flow in the brain. In such a context (and in the wider class of problems we consider in this paper), tissue permeability is of the order of 10$^{-15}$m$^2$, and the incorporation of tangential interface transmission conditions usually involves terms that scale inversely proportional to the square root of 
permeability. Moreover, the solid is nearly incompressible, making the first Lam\'e parameter significantly larger than 
the other mechanical parameters and exhibiting volumetric locking for some types of displacement-based formulations.  Other flow regimes that are challenging include low-storage cases \cite{mardal20}. It is then important that the stability and convergence of the numerical approximations are preserved within the parameter ranges of interest. 

Here we follow \cite{lee17,lee19,oyarzua16, boon20} and use parameter-weighted norms to achieve robustness. However, as
we will see, combining proper preconditioners for Stokes and Biot single-physics problems is not sufficient  
for the interface coupled problem. In fact, the condition number of the preconditioned system, although robust in mesh size, grows     
like the square root of the ratio between fluid viscosity and permeability. 
This phenomenon is demonstrated in Example~\ref{ex:naive}, below. That is, the efficiency of seemingly
natural preconditioners varies with the material parameters.  
In order to  regain stability with respect to all parameters, we include both an additional fractional term involving the pressure and a metric term coupling the tangential fluid velocity and solid displacement at the interface, hereby increasing the  regularity at the interface in a proper parameter dependent manner.  This strategy draws inspiration from similar approaches employed in the design of robust solvers for Darcy and Stokes-Darcy couplings \cite{baerland20,boon21SD,holter20}. 

\subsection{Outline}
We have organised the contents of this paper in the following manner. The remainder of this section contains preliminaries on notation and functional spaces to be used throughout the manuscript. Section~\ref{sec:model} outlines the main details of the   balance equations, stating typical interfacial and boundary conditions, and restricting the discussion to the steady Biot-Stokes coupled problem. There we also include the weak formulation and  demonstrate that simple diagonal preconditioners based on standard norms do not lead to robustness over the whole parameter range. This issue is addressed in Section~\ref{sec:analysis} where we show well-posedness of the system  using a global inf-sup argument with  parameter weighted operators  in fractional spaces, which in turn assist in the design of robust solvers  by operator preconditioning.  Section~\ref{sec:results} discusses finite element discretization of the coupled problem using both conforming and non-conforming elements; and it also contains numerical experiments demonstrating robustness of the fractional preconditioner and its feasibility for simplified simulations of   interfacial flow in the brain.

\subsection{Preliminaries}
Let us consider a spatial domain $\Omega\subset \mathbb{R}^d$, where $d=2,3$, disjointly split into $\Omega_F$ and $\Omega_P$. These subdomains respectively represent the region
filled with an incompressible fluid and the elastic porous medium (a deformable solid matrix or an array of solid particles). We will denote by $\nn$ the unit normal 
vector on the boundary $\partial\Omega$, and by $\Sigma=\partial\Omega_F\cap \partial\Omega_P$ 
the interface between the two subdomains, which is assumed sufficiently regular. We 
adopt the convention that 
on $\Sigma$ the normal vector points from $\Omega_F$ to $\Omega_P$. 
We also define the boundaries $\Gamma_F = \partial\Omega_F\setminus\Sigma$ and 
$\Gamma_P = \partial\Omega_P \setminus\Sigma$. The sub-boundary $\Gamma_F$ is further decomposed between $\Gamma_F^{\bu}$ and $\Gamma_F^{\bsigma}$ where we impose, respectively,  no slip velocities and zero normal total stresses. Similarly, we split 
$\Gamma_P$ into $\Gamma_P^{p_P}$ and $\Gamma_P^{\bd}$ where we prescribe zero traction and clamped boundaries, 
respectively. 
For the analysis, the setup of trace spaces needs that $\mathrm{dist}(\Sigma,\Gamma_P^{p_P}) >0$ and that $\mathrm{dist}(\Sigma,\Gamma_F^{\bsigma}) >0$, which can be satisfied if the interface meets the boundary at the Biot displacement and
Stokes velocity boundaries (see Figure~\ref{fig:sketch}, left), 
where $\Gamma_P^{\bd} = \Gamma_{1,P}^{\bd} \cup \Gamma_{2,P}^{\bd}$ and  $\Gamma_F^{\bu} = \Gamma_{1,F}^{\bu} \cup \Gamma_{2,F}^{\bu}$. 
Our numerical tests will also include cases where the interface intersects boundaries $\Gamma_F^{\bsigma}$ and $\Gamma_P^{p_P}$ on
the Stokes and Biot sides, respectively. 

\begin{figure}[t!]
\centering
   \includegraphics[width=0.42\textwidth]{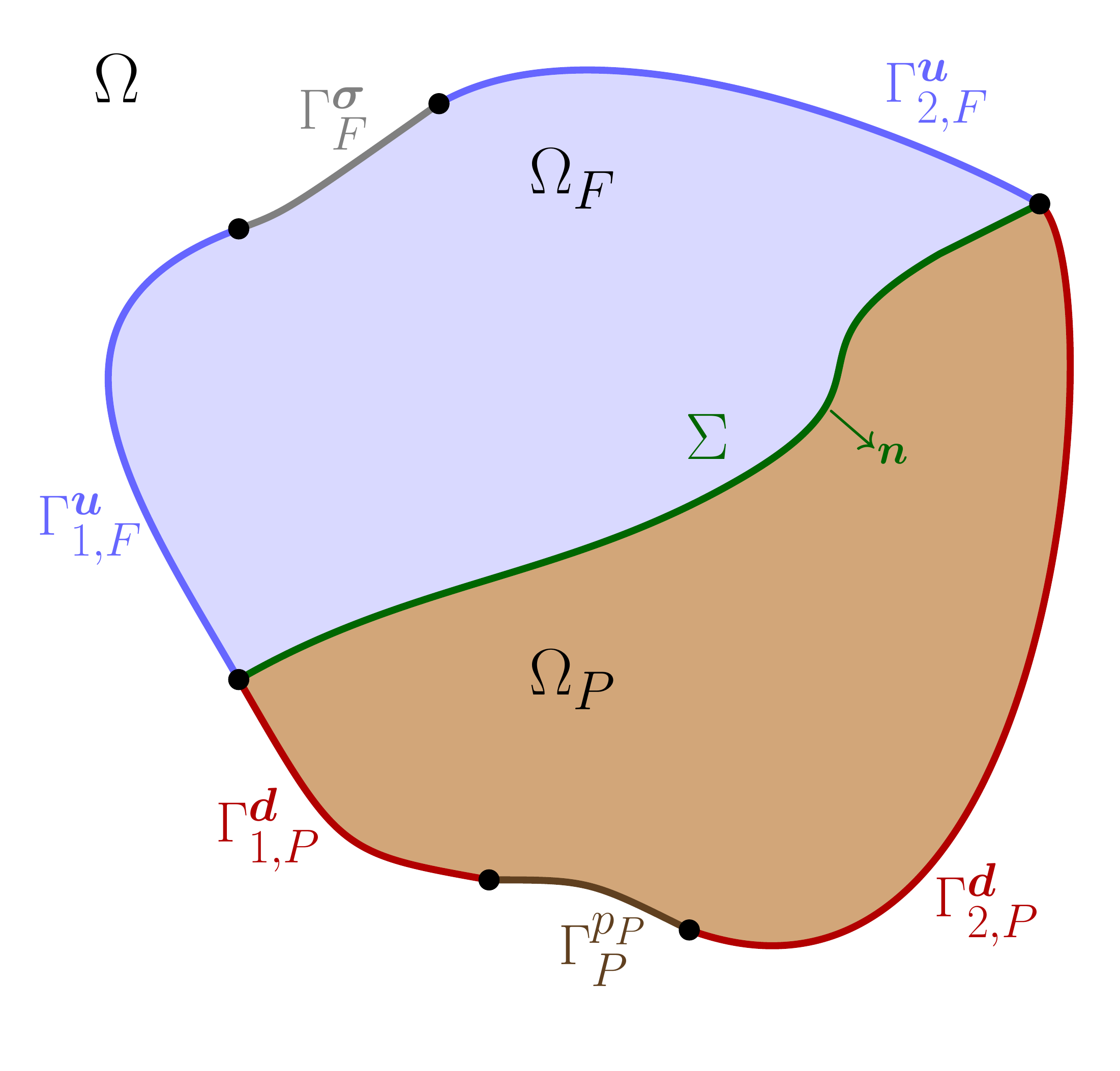}  
   \qquad  \includegraphics[width=0.42\textwidth]{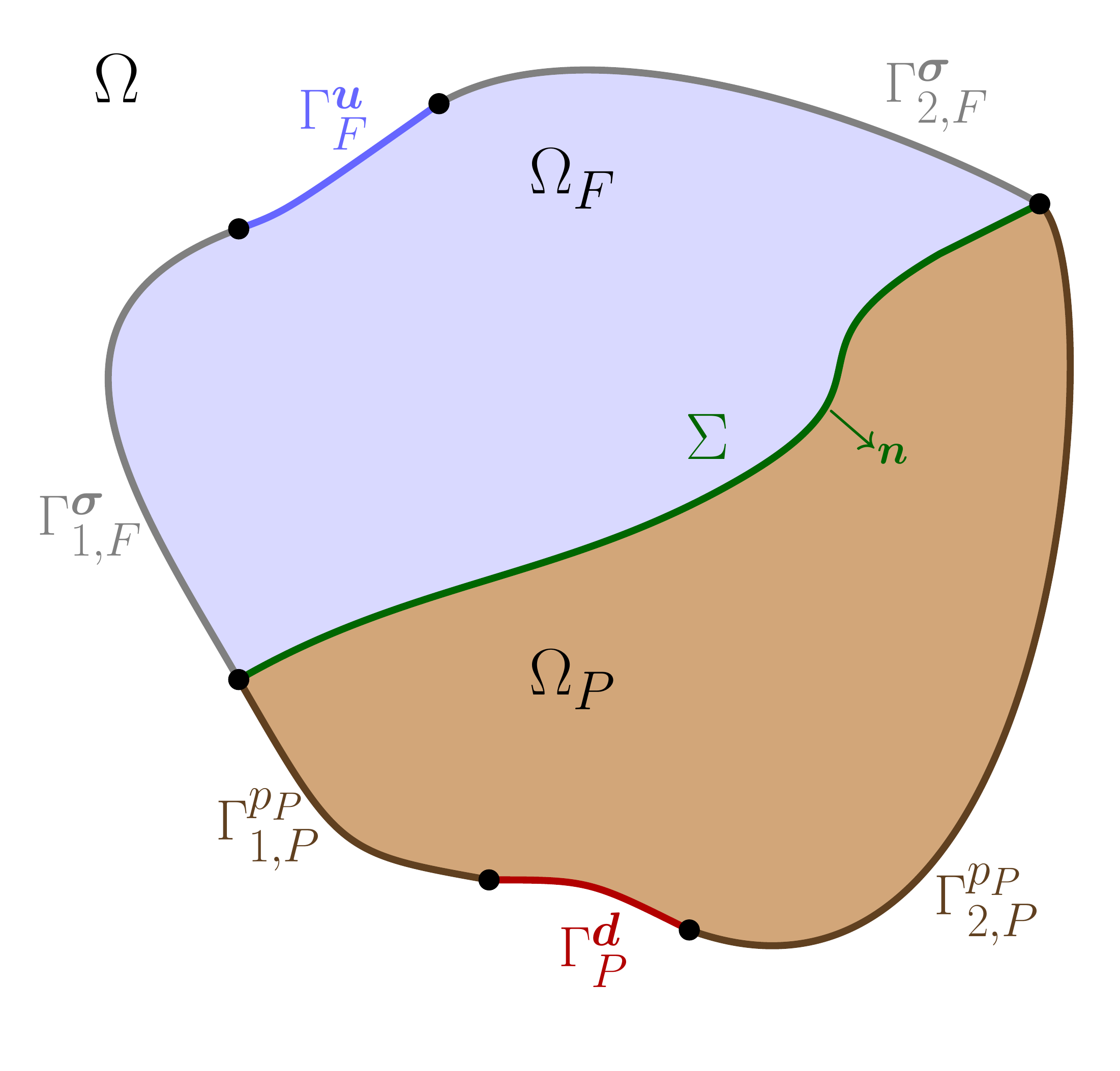}
  \vspace{-10pt}
  \caption{A configuration of subdomains and boundary partition for the
      Biot-Stokes coupled problem. (Left) Setup assumed for the analysis in Theorem \ref{theo:iso}.
      (Right) Example of another configuration investigated by some of the numerical experiments in Section \ref{sec:num_robust_neumann}.}  \label{fig:sketch}
\end{figure}

For generic Sobolev spaces $X,Y$ and a scalar $\zeta>0$, the weighted space $\zeta X$ refers to $X$ endowed with 
the norm $\zeta \|\cdot\|_X$. The intersection $X\cap Y$ provided with the norm $\| v \|^2_{X\cap Y} = \|v\|^2_X + \|v\|_Y^2$,  is a Hilbert space \cite{baerland20, bergh2012interpolation}. 

Vector fields and vector-valued spaces will be written in boldface. In addition, by $L^2(\Omega)$ we will denote 
the usual Lebesgue space of square integrable functions and $H^m(\Omega)$ denotes the usual Sobolev space with weak derivatives of order up to $m$ in $L^2(\Omega)$, and $\bH^m(\Omega)$ denotes its vector counterpart. In addition, 
$\bL_{\bt}^2(\Sigma)$ will denote the space of functions $\bz:\Omega_i\to \mathbb{R}^d$, $i\in \{F,P\}$, {such that $\bz-\nn(\bz\cdot \nn)\in \bL^2(\Sigma)$.}

An $L^2(\Omega)$ (as well as $\bL^2(\Omega)$) inner product over a generic bounded domain $\Omega$ is
denoted as $(\cdot, \cdot)_{\Omega}$. The symbol $\langle\cdot,\cdot\rangle_\Sigma$ will denote the pairing between the trace functional space $H^{1/2}(\Sigma)$ and its dual $H^{-1/2}(\Sigma)$, and   we will also write $\langle \cdot,\cdot\rangle$ to denote other, more general, duality pairings. Moreover, for $\bz \in \bH^1(\Omega_i) \cap \bL^2_{\bt}(\Sigma)$, 
its normal and tangential traces defined by bounded surjective maps, will be denoted by $T_{\nn}\bz \in H^{1/2}(\Sigma)$ and 
{$T_{\bt}\bz \in \bL^2(\Sigma)$}, respectively  
\cite{holter20}. We also remark that, while $H^{1/2}(\Sigma)$ is a subspace of $L^2(\Sigma)$ as a set, 
the tangential trace of $\bz \in \zeta_1 \bH^1(\Omega_i) \cap \zeta_2 \bL^2_{\bt}(\Sigma)$
is in $\zeta_1 \bH^{1/2}(\Sigma) \cap \zeta_2 \bL^2(\Sigma)$ 
and that in our setting it is important to track the parameter dependence for the sake of  robustness. 

Pertaining to the poroelastic domain, and assuming momentarily that $|\Gamma^{\bd}_P|=0$, we recall from \cite[Section 2.4.2]{Gatica2014} the definition of the space 
 \begin{equation}
\label{eq:H00} 
 H_{00}^{1/2}(\Sigma) = \{\eta \in H^{1/2}(\Sigma): \ E_{00}(\eta)\in H^{1/2}(\partial\Omega_P)\},\end{equation}
supplied with the norm 
\[ \|\eta\|_{1/2,00,\Sigma} := \| E_{00}(\eta)\|_{1/2,\partial\Omega_P},\]
where 
$E_{00}:H^{1/2}(\Sigma)\to H^{1/2}(\partial\Omega_P)$ denotes the extension-by-zero operator 
\[E_{00}(\eta)= \begin{cases}\eta & \text{on $\Sigma$},\\ 0 & \text{on $\partial\Omega_P\setminus\Sigma$},\end{cases} \qquad \forall \eta \in H^{1/2}(\Sigma).\]

Furthermore, the restriction of $\psi\in H^{-1/2}(\partial\Omega_P)$ to $\Sigma$, defined as 
\[ \langle \psi|_{\Sigma},\eta \rangle_{\Sigma} = \langle\psi,E_{00}(\eta)\rangle_{\partial\Omega_P} \quad \forall \eta \in H^{1/2}_{00}(\Sigma),\]
belongs to the dual $[H_{00}^{1/2}(\Sigma)]'$  of $H_{00}^{1/2}(\Sigma)$. 
Its norm is 
\begin{equation}\label{eq:norm-12}
\|\psi|_{\Sigma}\|_{-1/2,00,\Sigma} := \sup_{0\neq \eta\in H_{00}^{1/2}(\Sigma)}\frac{\langle \psi|_{\Sigma},\eta\rangle_{\Sigma} }{\|\eta\|_{1/2,00,\Sigma}} = \sup_{0\neq \eta\in H_{00}^{1/2}(\Sigma)}\frac{\langle \psi,E_{00}(\eta)\rangle_{\partial\Omega_P} }{\|E_{00}(\eta)\|_{1/2,\partial\Omega_P}}.\end{equation}

The boundary setup in Figure~\ref{fig:sketch} is such that $|\Gamma^{\bd}_P|\cdot|\Gamma^{p_P}_P|>0$, and therefore a modification of \eqref{eq:H00}-\eqref{eq:norm-12}   
is required. 
With that purpose, we note that any $\eta \in H^{1/2}(\Sigma)$ can be continuously extended to $\varphi$ in the space 
\begin{equation}\label{eq:H0p}
H_{01}^{1/2}(\Sigma) := \{ \eta \in H^{1/2}(\Sigma):
E_{01}(\eta) \in H^{1/2}(\Gamma_1) \},\end{equation}
where $\bar \Gamma_1 := \bar \Sigma \cup \bar{\Gamma}_{1,P}^{\bd}\cup \bar{\Gamma}_{2,P}^{\bd}$ (see Figure~\ref{fig:sketch}, left), and 
\[E_{01}(\eta)= \begin{cases}\eta & \text{on $\Sigma$},\\ 0 & \text{on $\Gamma_P^{\bd}$},\end{cases} \qquad \forall \eta \in H^{1/2}(\Sigma).\]
This treatment can be interpreted as replacing $ H_{00}^{1/2}(\Sigma) $ by the space of functions  $\eta \in H^{1/2}(\Sigma)$ such that  $\xi^{-1/2} \eta \in L^{2}(\Sigma)$, where $\xi$ is a sufficiently regular trace function, positive on $\Sigma$, and vanishing only on $\Gamma^{\bd}_P$ (see, for example,  \cite{leroux93}). 

The norm of the resulting extension $\varphi\in H_{01}^{1/2}(\Sigma)$ is defined analogously as before, 
\begin{equation}\label{eq:norm-H0p}
 \|\varphi\|_{1/2,01,\Sigma} := \| E_{01}(\varphi) \|_{1/2,\Gamma_1},\end{equation}
and therefore   the restriction of a distribution to the interface, $\psi|_\Sigma$, is in the dual space $[H_{01}^{1/2}(\Sigma)]'$ and its norm is 
\begin{equation}\label{eq:norm-H0p-dual}
\|\psi|_{\Sigma}\|_{-1/2,01,\Sigma} := \sup_{0\neq \eta\in H_{01}^{1/2}(\Sigma)}\frac{\langle \psi|_{\Sigma},\eta\rangle_{\Sigma} }{\|\eta\|_{1/2,01,\Sigma}} = \sup_{0\neq \eta\in H_{01}^{1/2}(\Sigma)}\frac{\langle \psi,E_{01}(\eta)\rangle_{\Gamma_1} }{\|E_{01}(\eta)\|_{1/2,\Gamma_1}}.\end{equation}

\section{Governing equations and weak formulation}\label{sec:model}

The momentum and mass  balance equations for the flow 
in the fluid cavity are given by Stokes equations written in terms 
of fluid velocity $\bu$ and fluid pressure $p_F$, whereas 
the non-viscous filtration flow through the porous skeleton can be described by Darcy's law 
in terms of pressure head $p_P$, and the porous matrix elastostatics are 
 stated in terms of the solid displacement $\bd$. The coupled Biot-Stokes equations 
 arising after a backward Euler semi-discretization in time, with time step $\Delta t$, read 
\begin{subequations}\label{problem}
 \begin{align}
\label{eq:momentumA}
 -\bdiv[2\mu_f \beps(\bu) - p_F\bI] & = \rho_f \gg & \text{in $\Omega_F$},\\ 
\label{eq:massA}
\vdiv \bu & = 0 & \text{in $\Omega_F$},\\
\label{eq:momentumM}
-\bdiv[2\mu_s\beps(\bd) -\varphi \bI ]& = \rho_s\ff & \text{in $\Omega_P$},\\
\label{eq:phi}
\varphi - \alpha p_P + \lambda \vdiv\bd & = 0 & \text{in $\Omega_P$},\\
\label{eq:massM}
\bigl(C_0 + \frac{\alpha^2}{\lambda}\bigr)\frac{1}{\Delta t} p_P - \frac{\alpha}{(\Delta t) \lambda} \varphi  - \vdiv \biggl(\frac{\kappa}{\mu_f} \nabla p_P - \rho_f \gg\biggr) & = m_P& \text{in $\Omega_P$},
\end{align}\end{subequations}
where $\ff$ is a vector field of body loads, $\gg$ is the gravity acceleration, $\mu_f$ is the fluid viscosity, 
$\beps(\bu) = \frac{1}{2}(\nabla\bu+\nabla\bu^{\tt t})$ 
is the strain rate tensor, and $\beps(\bd) = \frac{1}{2}(\nabla\bd+\nabla\bd^{\tt t})$ is the 
infinitesimal strain tensor, $\rho_f,\rho_s$ are the density of the 
fluid and solid, respectively, $\lambda,\mu_s$ are the first and second Lam\'e constants of the solid, $\kappa$ is the heterogeneous tensor of permeabilities (satisfying $|\bw|^2 \lesssim \bw\cdot \kappa(\bx) \bw$ a.e. in $\Omega_P$ and for all $\bw\in \mathbb{R}^d$);  $m_P$ is a source/sink term for the fluid pressure (which also includes pressures in the previous backward Euler time step);  and $C_0, \alpha$ are the 
total storage capacity and Biot-Willis poroelastic coefficient. Here we have used 
the total pressure $\varphi: = \alpha p_P - \lambda\vdiv\bd$, as an additional unknown \cite{lee17,oyarzua16}. 

We furthermore supply boundary conditions as follows 
\begin{subequations}\label{eq:bc}
\begin{align}\label{eq:bcAD}
\bu & = \cero & \text{on $\Gamma_F^{\bu}$},\\
\label{eq:bcAN}
[2\mu_f\beps(\bu)-p_F\bI]\nn & = \cero& \text{on $\Gamma_F^{\bsigma}$},\\
\label{eq:bcMd}
\bd = \cero\quad \text{and} \quad \frac{\kappa}{\mu_f} \nabla p_P \cdot\nn &= 0 &\text{on $\Gamma_P^{\bd}$},\\
\label{eq:bcMu}
[2\mu_s\beps(\bd) -\varphi \bI]\nn = \cero \quad\text{and}\quad p_P&=p_{0}  &\text{on $\Gamma_P^{p_P}$}.
\end{align}\end{subequations}
In order to close the system, we consider the classical transmission conditions on $\Sigma$ accounting for the continuity of normal 
fluxes, momentum balance, equilibrium of fluid normal stresses, and the so-called Beavers-Joseph-Saffman condition for tangential fluid forces \cite{bukac15,showalter05}, which in the present setting reduce to 
\begin{subequations}
\begin{align}
\label{eq:inter-U}
\bu\cdot\nn & = (\frac{1}{\Delta t} \bd  - \frac{\kappa}{\mu_f} \nabla p_P )\cdot \nn & \text{on $\Sigma$},\\
\label{eq:inter-sigma}
(2\mu_f \beps(\bu) - p_F\bI)\nn & = (2\mu_s\beps(\bd) -\varphi \bI ) \nn & \text{on $\Sigma$},\\
\label{eq:inter-sigmaf}
-\nn\cdot (2\mu_f \beps(\bu) - p_F\bI)\nn & = p_P & \text{on $\Sigma$},\\
\label{eq:inter-bjs}
- \nn\cdot (2\mu_f \beps(\bu) - p_F\bI)\bt_j & = \frac{\gamma\mu_f}{\sqrt{\kappa}} (\bu - \frac{1}{\Delta t} \bd)\cdot \bt_j, \quad 1\leq j\leq d-1 & \text{on $\Sigma$},
\end{align}\end{subequations}
where $\gamma>0$ is the slip rate coefficient depending on the geometry of the domain, and we recall that the normal $\nn$ on the interface is understood as pointing from the fluid domain $\Omega_F$ towards the porous structure $\Omega_P$, while $\bt_j$, $1\leq j\leq d-1$  are orthonormal tangent vectors on $\Sigma$, normal to $\nn$.

We proceed to 
test \eqref{eq:momentumA}-\eqref{eq:massM} against suitable smooth functions and to integrate over the corresponding subdomain. The challenging model parameters  are $\mu_f$, $C_0$, $\lambda$, $\gamma$, $\alpha$, and the magnitude of $\kappa$. Therefore, we will concentrate on the specific case where $\Delta t = 1$, and the model parameters (in particular $\kappa$) are spatially constant.
 Following \cite{karper09}, after applying 
integration by parts wherever adequate and using  the transmission conditions \eqref{eq:inter-U}-\eqref{eq:inter-bjs}, we arrive at 
the following remainder on the interface 
\[
\langle p_P, (\bv - \bw)\cdot \nn \rangle_\Sigma +  \frac{\gamma\mu_f}{\sqrt{\kappa}} \sum_{j=1}^{d-1}\langle (\bu - \bd)\cdot \bt_j,  (\bv-\bw)\cdot \bt_j\rangle_\Sigma +\langle (\bu-\bd)\cdot \nn, q_P \rangle_\Sigma,\]
which is well-defined and therefore no additional Lagrange multipliers are required to realize the coupling conditions. Also, in view of the character of the resulting variational forms in combination with the specification of boundary conditions \eqref{eq:bc}, we  
define the Hilbert spaces 
\begin{gather*}
\bH^1_\star(\Omega_F) = \{ \bv\in \bH^1(\Omega_F): \bv|_{\Gamma_F^{\bu}} = \cero\}, \quad 
\bH^1_\star(\Omega_P) = \{ \bw\in \bH^1(\Omega_P): \bw|_{\Gamma_P^{\bd}} = \cero\},\\
 H^1_\star(\Omega_P) = \{ q_P\in H^1(\Omega_P): q_P|_{\Gamma_P^{p_P}} = 0\},
\end{gather*}
and the product space $\bH$
\begin{equation}\label{eq:sol_space}
  \bH = \bH^1_{\star}(\Omega_F)\times \bH^1_{\star}(\Omega_P)\times L^2(\Omega_F)\times L^2(\Omega_P) \times H^1_{\star}(\Omega_P).
\end{equation}
%
Consequently, we have the following weak form for the Biot-Stokes coupling: Find $(\bu, \bd, p_F, \varphi, p_P)\in\bW$ such that 
\begin{subequations} \label{eq:weak}
\begin{align}
 2\mu_f(\beps(\bu),\beps(\bv))_{\Omega_F}
 +\frac{\gamma\mu_f}{\sqrt{\kappa}}\langle T_{\bt} (\bu-\bd),T_{\bt}\bv \rangle_\Sigma & \nonumber\\
 -(p_F, \vdiv \bv)_{\Omega_F}
 + \langle p_P, T_{\nn} \bv \rangle_\Sigma 
 &= F^F(\bv) 
 &\forall \bv  \in \bH^1_{\star}(\Omega_F), \\
  \frac{\gamma\mu_f}{\sqrt{\kappa}}\langle T_{\bt}(\bd-\bu), T_{\bt}\bw\rangle_\Sigma
  + 2\mu_s(\beps(\bd),\beps(\bw))_{\Omega_P} & \nonumber\\ 
  -(\varphi, \vdiv \bw)_{\Omega_P}
  - \langle p_P, T_{\nn} \bw \rangle_\Sigma 
  &= F^P(\bw) &\forall \bw  \in \bH^1_{\star}(\Omega_P),\\
  -(\vdiv \bu, q_F)_{\Omega_F}
  & = 0 & \forall q_F \in  L^2(\Omega_F), \\ 
  \frac{1}{\lambda}(\alpha p_P - \varphi, \psi)_{\Omega_P}
  - (\vdiv \bd, \psi)_{\Omega_P}
  & = 0 & \forall \psi \in L^2(\Omega_P), \\
  - \bigl(C_0+\frac{\alpha^2}{\lambda}\bigr)(p_P, q_P)_{\Omega_P} 
  + \frac{\alpha}{\lambda}(q_P, \varphi)_{\Omega_P} \qquad \quad & \nonumber\\
  + \langle q_P, T_{\nn}(\bu-\bd)\rangle_\Sigma 
  -\frac{\kappa}{\mu_f}(\nabla q_P,\nabla p_P)_{\Omega_P} 
  & = G(q_P) &\forall q_P  \in H^1_{\star}(\Omega_P),
  \end{align}
\end{subequations}
 where 
\begin{gather*} 
  F^F(\bv) = \rho_f( \gg,\bv)_{\Omega_F},  \quad  
  F^P(\bw) = \rho_s(\ff,\bw)_{\Omega_P}, \quad  
  G(q_P) = -(m_P,q_P)_{\Omega_P} - \rho_f(\gg,\nabla q_P)_{\Omega_P} + \rho_f\langle \gg\cdot\nn,q_P\rangle_\Sigma.
\end{gather*}

System \eqref{eq:weak} differs from that analyzed in \cite{ruiz21} in the ordering of the unknowns, and in that we 
obtain a symmetric multilinear formulation defined by a global operator $\mathcal{A}$ (the coefficient 
matrix of the left-hand side of \eqref{eq:weak}) of the form 
\begin{equation}\label{eq:A}
\small{
\left(\begin{array}{cc:ccc}
  -2\mu_f\bdiv \beps + \frac{\gamma\mu_f}{\sqrt{\kappa}} T_{\bt}'T_{\bt} &  -\frac{\gamma\mu_f}{\sqrt{\kappa}} T_{\bt}'                            & \nabla       &                          & T_{\nn}'\\
  -\frac{\gamma\mu_f}{\sqrt{\kappa}} T_{\bt} &\!\!\!\!\!  - 2\mu_s\bdiv\beps + \frac{\gamma\mu_f}{\sqrt{\kappa}} T_{\bt}'T_{\bt}        &         &\!\!\! \!\! \!\! \nabla                   &  -T_{\nn}'\vphantom{\int^{X^X}}  \\[1ex]
  \hdashline
               -\vdiv                                                  &                                                                                &       &                          &\\
                                                                       &   -\vdiv                                                                        &      &\!\!\!\!\! \!\!  - \frac{1}{\lambda} I    & \frac{\alpha}{\lambda} I \\[1ex]
               T_{\nn}                                                  &  -T_{\nn}                                                                      &          &\!\!\!\!\! \!\!  \frac{\alpha}{\lambda} I &\!\!\! \!\!  -\bigl(C_0 + \frac{\alpha^2}{\lambda}\bigr) I + \frac{\kappa}{\mu_f} \Delta  
              \end{array}\right),}
\end{equation}
where the dependence on the model parameters is clearly identified. In particular, the interface coupling terms on
the first off-diagonal blocks depend on the inverse of permeability.

We note that $\mathcal{A}$ can be regarded as defining a perturbed saddle-point problem, with 
\[
\mathcal{A}=\begin{pmatrix}
A & B^{\prime}\\
B & -C\\
\end{pmatrix},
\]
where the composing blocks are defined as
\begin{subequations}
\begin{gather}\label{eq:bs_A}
A = 
\begin{pmatrix}
  \mathcal{A}_{FF} & \mathcal{A}_{FP}\\
  \mathcal{A}_{PF} & \mathcal{A}_{PP}
\end{pmatrix}=
\begin{pmatrix}
  -2\mu_f\bdiv \beps & 0\\
  0 & -2\mu_s\bdiv \beps\\
\end{pmatrix}+
\gamma\frac{\mu_f}{\sqrt{\kappa}}\begin{pmatrix}T_{\bt}'\\ -T_{\bt}'\end{pmatrix}
  \begin{pmatrix}
    T_{\bt}& 
    -T_{\bt}
    \end{pmatrix},\\
\label{eq:bs_BC}
B = \begin{pmatrix}
  -\vdiv & 0\\
  0    & -\vdiv\\
  T_{\nn} & -T_{\nn}\\  
\end{pmatrix},\quad 
C = \begin{pmatrix}
  0 & 0 & 0\\
  0 & \frac{1}{\lambda} I    & -\frac{\alpha}{\lambda} I \\
  0 & -\frac{\alpha}{\lambda} I & \bigl(C_0 + \frac{\alpha^2}{\lambda}\bigr) I - \frac{\kappa}{\mu_f} \Delta
\end{pmatrix}.
\end{gather}\end{subequations}
In turn, well-posedness of the Biot-Stokes system \eqref{eq:weak}
  in the product space \eqref{eq:sol_space} can be established using the abstract Brezzi-Braess theory \cite{braess1996stability}, after invoking separately the solvability and stability 
  results for the Stokes subproblem \cite{girault-raviart} and the Biot subproblem in the
  three-field total pressure formulation \cite{lee17}. However, such a 
  decoupled approach does not lead to stability independent of the
  material parameters and consequently, preconditioners based on the
  standard norms (also referred to as single-physics or sub-physics 
  preconditioners) are not necessarily parameter robust.
  This issue is demonstrated next in Example \ref{ex:naive}.

\begin{example}[Simple preconditioners using standard norms]\label{ex:naive}
  We consider the Biot-Stokes formulation \eqref{eq:weak} defined on the subdomains $\Omega_F=(0, \tfrac{1}{2})\times (0, 1)$,
  $\Omega_P=(\tfrac{1}{2}, 1)\times (0, 1)$ with boundary conditions
  such that the left edge of $\Omega_F$ is a no-slip boundary $\Gamma_F^{\bu}$
  while the top and bottom edges will form $\Gamma^{\bsigma}_F$. Similarly, the top and
  bottom edges on the Biot side are considered stress-free while the right edge
  is clamped.

  Based on the well-posedness of \eqref{eq:weak} in the space $\bH$ (cf. \eqref{eq:sol_space}), 
  we can readily  consider a solution space with weighted inner product leading to the Riesz map (diagonal) preconditioner
  \begin{equation}\label{eq:precond_diagonal}
 \cRD = \begin{pmatrix}
\mathcal{A}_{FF} &  &  & & \\
&\!\!\!\!\! \mathcal{A}_{PP} & && \\
&&\!\!\!\!\! \frac{1}{2\mu_f} I & & \\
&&&\!\!\!\!\!  \bigl(\frac{1}{\lambda}+\frac{1}{2\mu_s}\bigr) I & \\
&&&&\!\!\!\!\!\! \bigl(C_0 + \frac{\alpha^2}{\lambda}\bigr) I - \frac{\kappa}{\mu_f} \Delta  
\end{pmatrix}^{-1}.
  \end{equation}
  We remark that the first and third blocks of $\cRD$ together define a parameter
  robust preconditioner for the (standalone) Stokes problem and the remaining
  blocks form the robust three-field Biot preconditioner \cite{lee17}.
  However, in $\cRD$ the subproblem preconditioners are decoupled.

  Alternatively, after observing that the operator $A$ in \eqref{eq:bs_A}
  defines a norm over the velocity-displacement space $\bH^1_{\star}(\Omega_F)\times \bH^1_{\star}(\Omega_P)$ 
  we will also investigate the (block-diagonal) preconditioner
  \begin{equation}\label{eq:precond_tangent}
 \cRC = \begin{pmatrix}
\mathcal{A}_{FF} & \mathcal{A}_{FP} &  & & \\
\mathcal{A}_{PF} &\!\!\!\!\mathcal{A}_{PP} & && \\
&&\!\!\!\!\! \frac{1}{2\mu_f} I & & \\
&&&\!\!\!\!\!  \bigl(\frac{1}{\lambda}+\frac{1}{2\mu_s}\bigr) I & \\
&&&&\!\!\!\!\!\! \bigl(C_0 + \frac{\alpha^2}{\lambda}\bigr) I - \frac{\kappa}{\mu_f} \Delta
\end{pmatrix}^{-1}.
  \end{equation}
  We note that in \eqref{eq:precond_tangent} the tangential components of
  the Stokes velocity and of the Biot displacement are coupled. In this sense, 
  the preconditioner captures the interaction between the subsystems and, in particular,
  the coupling through the Beavers-Joseph-Saffman condition \eqref{eq:inter-bjs}.

  To investigate the robustness of these preconditioners, we set typical physical parameters
  in \eqref{eq:weak} except for $\mu_f$ and $\kappa$, which are to be varied.
  Using a discretization in terms of the lowest-order Taylor-Hood elements (see
  more details in Section~\ref{sec:discrete}), we next consider the boundedness
  of the number of iterations of the preconditioned MinRes solver under mesh refinement
  and parameter variations. More precisely, using $\cRD$, $\cRC$ (inverted by LU)
  we compare the number of iterations required for convergence 
  determined by reducing the preconditioned residual norm by a factor $10^8$.
  The initial vector is taken as random. 

  We report the results in Table \ref{tab:naive}. It can be seen that the number of MinRes iterations produced with the 
  diagonal preconditioner $\cRD$ is rather sensitive to variations in both $\mu_f$ and $\kappa$.
  In comparison, when fixing $\mu_f=1$, the iterations appear to be more stable 
  in $\kappa$ when \eqref{eq:precond_tangent} is used. However, if $\kappa=1$
  is set, there is a clear deterioration of the performance for small values of $\mu_f$
  also with the preconditioner $\cRC$.
\begin{table}[!t]
  \centering
  \small{
    \begin{tabular}{c|c|cccc||cccc}
      \hline
      & & \multicolumn{4}{c||}{Preconditioner \eqref{eq:precond_diagonal}} & \multicolumn{4}{c}{Preconditioner \eqref{eq:precond_tangent}}\\
      \hline
   $\mu_f$  & \backslashbox{$\kappa$}{$h$} & $2^{-2}$ & $2^{-3}$ & $2^{-4}$ & $2^{-5}$ & $2^{-2}$ & $2^{-3}$ & $2^{-4}$ & $2^{-5}$\\
      \hline
\multirow{3}{*}{$1$} & $10^{-4}$ & 211 & 240 & 258 & 264  & 71 & 82 & 89 & 89\\
& $10^{-2}$ & 76 & 76 & 74 & 73        & 50 & 49 & 48 & 48\\
& $1$      & 41 & 41 & 41 & 41             & 37 & 37 & 36 & 36\\
\hline
      \hline
   $\kappa$  & \backslashbox{$\mu_f$}{$h$} & $2^{-2}$ & $2^{-3}$ & $2^{-4}$ & $2^{-5}$ & $2^{-2}$ & $2^{-3}$ & $2^{-4}$ & $2^{-5}$ \\
      \hline
\multirow{3}{*}{$1$} & $10^{-8}$ & 394 & 184 & -- & -- & 693 & 471 & 639 & --\\                       
&$10^{-2}$ & 59  & 59 & 59 & 58    & 58 & 57 & 55 & 55\\                           
&$1$      & 41  & 41 & 41 & 41    &  37 & 37 & 36 & 36\\                           
      \hline
    \end{tabular}
    }
  
  \smallskip   
  \caption{
    Performance of preconditioners \eqref{eq:precond_diagonal} and \eqref{eq:precond_tangent}
    for the Biot-Stokes system \eqref{eq:weak} in Example \ref{ex:naive}. Only the parameters $\mu_f$ and  $\kappa$ are varied away 
    from 1. The lack of convergence after 750 MinRes iterations is indicated as --.
  }
    \label{tab:naive}
\end{table}

\end{example}

The improved performance of the preconditioner $\cRC$,
  which preserves the tangential coupling of the Stokes and Biot problems
  in \eqref{eq:inter-bjs}, over $\cRD$, where the components are
  decoupled, suggests to strengthening the coupling (involving the tangential traces) in order
  to obtain parameter robustness. However, from the point
  of view of the interface conditions \eqref{eq:inter-U}-\eqref{eq:inter-bjs},
  it is clear that the coupling in the normal direction is missing in $\cR_{C}$.

  With the above idea in mind, we proceed to establish well-posedness of \eqref{eq:weak}
  in the product space equipped with non-standard norms that include additional
  control at the interface reflecting/arising from the mass conservation
  condition \eqref{eq:inter-U}.

\section{Well-posedness of the Biot-Stokes  system}\label{sec:analysis}
Let us group the variables as $\vec{\bu}= (\bu,\bd)$ and $\vec{p} = (p_F, \varphi, p_P)$ and introduce the weighted norm
\begin{equation}\label{eq:product-norm}
\|(\vec{\bu}, \vec{p}) \|_{\bH_\epsilon}^2 : = 
\| \vec{\bu} \|_A^2
+ | \vec{p} |_B^2
+ | \vec{p} |_C^2,
\end{equation}
with
\begin{subequations}
\begin{align}
  \| \vec{\bu} \|_A^2
  &:= 2\mu_f \|\beps(\bu)\|^2_{0,\Omega_F} 
  + \sum_{j=1}^{d-1} \frac{\gamma\mu_f}{\sqrt{\kappa}} \|(\bu-\bd)\cdot \bt_j\|^2_{0,\Sigma} 
  + 2\mu_s \|\beps(\bd)\|^2_{0,\Omega_P}, \label{normA}\\
  | \vec{p} |_B^2 
  &:= 
  \frac{1} { {2\mu_f}} \|p_F\|^2_{0,\Omega_F} 
  + \frac{1}{2\mu_s}\|\varphi\|^2_{0,\Omega_P}
  + \left( \frac{1}{2\mu_f} + \frac{1}{2\mu_s} \right) {\| p_P|_\Sigma \|^2_{-\frac12,01,\Sigma}}, \label{normB}\\
  | \vec{p} |_C^2 
  &:= 
  \frac{1}{\lambda} \|\varphi-\alpha p_P\|^2_{0,\Omega_P}
  + C_0 \|p_P\|^2_{0,\Omega_P} + \frac{\kappa}{\mu_f} \|\nabla p_P\|^2_{0,\Omega_P},\label{normC}
\end{align}
\end{subequations}
where the fractional norm is defined in \eqref{eq:norm-H0p-dual}.

In turn, we define the weighted product space $\bH_\epsilon$ as the space that contains all $(\vec \bu, \vec p)$ that are bounded in this norm.
The subscript $\epsilon$ encodes the collection of weighting parameters  $\kappa,\alpha,\gamma,\mu_f,\mu_s,C_0,\lambda$. Moreover, the space allows for the natural decomposition:
\[
\bH_\epsilon = \vec \bV \times \vec Q.
\]

\begin{theorem}\label{theo:iso}
Problem \eqref{eq:weak} is well-posed in the space $\bH_\epsilon$ equipped with the norm \eqref{eq:product-norm}. In other words, the operator $\cA: \bH_\epsilon\to \bH_\epsilon'$ in \eqref{eq:A} 
is a symmetric isomorphism satisfying 
\begin{subequations}
\begin{align}
 \| \cA\|_{\mathcal{L}(\bH_\epsilon, \bH_\epsilon')} \leq C_1, \label{continuity}\\
 \| \cA^{-1}\|_{\mathcal{L}(\bH_\epsilon', \bH_\epsilon)} \leq C_2, \label{invertibility}
\end{align}\end{subequations}
where $C_1,C_2$ are positive constants independent of $\epsilon$. 
\end{theorem}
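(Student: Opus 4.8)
The plan is to read \eqref{eq:A} as the symmetric perturbed saddle-point operator $\cA=\left(\begin{smallmatrix} A & B' \\ B & -C\end{smallmatrix}\right)$, with coupling form $b(\vec\bu,\vec p):=\langle B\vec\bu,\vec p\rangle$, and to apply the abstract Brezzi--Braess framework \cite{braess1996stability}: when $A$ is symmetric positive definite and $C$ is symmetric positive semidefinite, $\cA$ is an isomorphism onto the dual with constants governed solely by the continuity and inf-sup constants of the constituent forms, measured in the norm induced by $\mathrm{diag}(A,\,BA^{-1}B'+C)$. Since the $\vec\bV$-part of \eqref{eq:product-norm} is by definition $\|\cdot\|_A$, and completing the square in $\varphi-\alpha p_P$ shows that $|\vec p|_C^2=\langle C\vec p,\vec p\rangle$, the blocks $A$ and $C$ are bounded with constant one, and the whole statement reduces to two uniform estimates: the boundedness of $b$, and the equivalence $|\vec p|_B^2\simeq\langle BA^{-1}B'\vec p,\vec p\rangle=\sup_{\vec\bu\neq\cero} b(\vec\bu,\vec p)^2/\|\vec\bu\|_A^2$, i.e. a parameter-robust inf-sup condition for $B$. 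As a preliminary I would verify that $\|\cdot\|_A$ is a genuine norm on $\vec\bV=\bH^1_\star(\Omega_F)\times\bH^1_\star(\Omega_P)$: because $|\Gamma_F^{\bu}|,|\Gamma_P^{\bd}|>0$, Korn's and Poincar\'e's inequalities yield $\|\beps(\bu)\|_{0,\Omega_F}\simeq\|\bu\|_{1,\Omega_F}$ and $\|\beps(\bd)\|_{0,\Omega_P}\simeq\|\bd\|_{1,\Omega_P}$ with geometry-dependent, hence $\epsilon$-free, constants, the tangential interface term only adding positivity.

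For the continuity bound \eqref{continuity} I would estimate each entry of \eqref{eq:A} by Cauchy--Schwarz against the three pieces \eqref{normA}--\eqref{normC}. The volumetric terms are immediate; the only delicate contribution is the interface pairing $\langle p_P,T_{\nn}(\bu-\bd)\rangle_\Sigma$. I would split it as $\langle p_P,T_{\nn}\bu\rangle_\Sigma-\langle p_P,T_{\nn}\bd\rangle_\Sigma$ and bound each factor through the duality \eqref{eq:norm-H0p-dual} between $H^{1/2}_{01}(\Sigma)$ and its dual, together with the boundedness of the normal trace $T_{\nn}$ and Korn's inequality. The weight $\tfrac1{2\mu_f}+\tfrac1{2\mu_s}$ attached to $\|p_P|_\Sigma\|^2_{-1/2,01,\Sigma}$ in \eqref{normB} is precisely what lets the fluid contribution be absorbed by $\sqrt{2\mu_f}\,\|\beps(\bu)\|_{0,\Omega_F}$ and the solid one by $\sqrt{2\mu_s}\,\|\beps(\bd)\|_{0,\Omega_P}$, yielding an $\epsilon$-independent $C_1$.

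The heart of the proof is the inf-sup condition behind \eqref{invertibility}. Given $\vec p=(p_F,\varphi,p_P)$ I would construct a corrector $\vec\bu=(\bu,\bd)$ from three ingredients. First, surjectivity of $\vdiv:\bH^1_\star(\Omega_F)\to L^2(\Omega_F)$ and $\vdiv:\bH^1_\star(\Omega_P)\to L^2(\Omega_P)$ (valid here because each subdomain carries a nontrivial traction boundary, so no zero-mean constraint appears) provides fields controlling $\|p_F\|_{0,\Omega_F}$ and $\|\varphi\|_{0,\Omega_P}$, exactly as in the standard Stokes and three-field Biot analyses \cite{girault-raviart,lee17}. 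Second, using that $T_{\nn}$ admits a bounded right inverse into $\bH^1_\star$ with independently prescribable tangential trace, I would lift the Riesz representative of $p_P|_\Sigma$ in the $\|\cdot\|_{-1/2,01,\Sigma}$ duality to a field with \emph{vanishing} tangential trace---so that the Beavers--Joseph--Saffman term in \eqref{normA} is not excited and no inverse power of $\kappa$ enters---and I would do this from both sides of $\Sigma$ so as to reproduce the full weight $\tfrac1{2\mu_f}+\tfrac1{2\mu_s}$; this uses the geometric hypotheses $\mathrm{dist}(\Sigma,\Gamma_F^{\bsigma})>0$ and $\mathrm{dist}(\Sigma,\Gamma_P^{p_P})>0$ singling out $H^{1/2}_{01}(\Sigma)$ as the correct trace space. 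Finally I would superpose the three correctors with a small relative scaling and use Young's inequality to absorb the cross terms (the interface traces of the divergence correctors, and the divergence of the interface lifting) into the dominant diagonal contributions, obtaining $b(\vec\bu,\vec p)\gtrsim|\vec p|_B^2$ with $\|\vec\bu\|_A\lesssim|\vec p|_B$. Equivalently, this delivers the global inf-sup directly, testing $(\vec\bu,\vec p)$ against $(\vec\bu,-\vec p)+\delta(\vec\bu_{\vec p},\cero)$ with $\vec\bu_{\vec p}$ the above corrector and $\delta$ small.

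I expect the interface lifting to be the main obstacle: one must realise $p_P|_\Sigma$ through the normal trace alone, in the \emph{weighted fractional} norm of \eqref{normB}, while keeping the tangential trace controlled so that the potentially enormous factor $\gamma\mu_f/\sqrt{\kappa}$ never enters the stability constant, and while decoupling this construction from the two divergence correctors. A point worth stressing is that $B$ alone only senses the interface trace of $p_P$; full control of $\|p_P\|_{0,\Omega_P}$ and $\tfrac{\kappa}{\mu_f}\|\nabla p_P\|_{0,\Omega_P}$ is recovered through the positive semidefinite block $C$, so that $BA^{-1}B'+C$ is spectrally equivalent to $|\cdot|_B^2+|\cdot|_C^2$ and the Brezzi--Braess estimates deliver the asserted $\epsilon$-independent constants $C_1,C_2$.
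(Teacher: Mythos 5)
Your overall architecture coincides with the paper's: continuity via Cauchy--Schwarz plus the $H^{1/2}_{01}$-duality for the interface pairing, a parameter-robust inf-sup condition for $B$ in the weighted norms (the paper's Lemma~\ref{lem: inf-sup B}), and the combination step testing with $(\vec\bu+\delta\vec\bv_p,-\vec p)$ --- your ``equivalently'' remark at the end of the third paragraph is literally the paper's BNB argument, including the Young-inequality choice of $\delta$. The genuine gap lies in your construction of the inf-sup corrector. Your first ingredient takes the divergence correctors from the plain surjectivity of $\vdiv$ on $\bH^1_\star(\Omega_F)$ and $\bH^1_\star(\Omega_P)$, which leaves their traces on $\Sigma$ completely uncontrolled, and you then propose to absorb ``the interface traces of the divergence correctors'' by Young's inequality. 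That works for the normal-trace cross terms in the pairing $\langle B\vec\bv,\vec p\rangle$, but not for the tangential traces: these enter the \emph{norm} $\|\vec\bv\|_A$ of \eqref{normA} through the Beavers--Joseph--Saffman term as a genuine squared quantity $\frac{\gamma\mu_f}{\sqrt{\kappa}}\sum_j\|(\bv_F-\bv_P)\cdot\bt_j\|^2_{0,\Sigma}$ whose coefficient is unbounded as $\kappa\to 0$. There is nothing in $|\vec p|_B^2$ to absorb it into --- no choice of a small scaling removes a squared term whose prefactor $\gamma\mu_f/\sqrt{\kappa}$ can be arbitrarily large relative to the weights $1/(2\mu_f)$ and $1/(2\mu_s)$ in \eqref{normB} --- so the required bound $\|\vec\bv\|_A\lesssim|\vec p|_B$ fails robustly. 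You spotted exactly this danger for the interface lifting, but it is equally fatal for the divergence correctors, and as written your proof does not address it.

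The repair is precisely what the paper does in Lemma~\ref{lem: inf-sup B}: each divergence corrector is taken as the velocity of an auxiliary Stokes problem with homogeneous Dirichlet data on $\Sigma\cup\Gamma_P^{\bd}$ (respectively $\Sigma\cup\Gamma_F^{\bu}$), so that its full trace on the interface vanishes, while the interface lifting is a \emph{divergence-free} Stokes extension with boundary datum $\zeta\nn$ (purely normal, $\zeta$ the Riesz representative of $p_P|_\Sigma$ in $H^{1/2}_{01}(\Sigma)$). Then all tangential traces vanish, the BJS weight never enters, and moreover there are no cross terms at all: the construction yields the exact identity $\langle B\vec\bv,\vec p\rangle=|\vec p|_B^2$ together with $\|\vec\bv\|_A^2\lesssim|\vec p|_B^2$, with no Young absorption needed inside the lemma. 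The free boundary parts $\Gamma_F^{\bsigma}$ and $\Gamma_P^{p_P}$ are what make these constrained auxiliary problems solvable without a mean-value restriction, which is the role your ``nontrivial traction boundary'' observation should be playing; once your correctors are rebuilt this way, the rest of your argument goes through and reproduces the paper's proof.
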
 
\begin{proof}
The operator norm is defined as
	\[
	\|\cA\|_{\mathcal{L}(\bH_{\beps},\bH_{\beps}')} := 
  \sup_{(\vec{\bu},\vec{p}), (\vec{\bv},\vec{q})}\frac{\langle\cA(\vec{\bu},\vec{p}),(\vec{\bv},\vec{q})\rangle}{\|(\vec{\bu},\vec{p})\|_{\bH_{\beps}}||(\vec{\bv},\vec{q})||_{\bH_{\beps}'}},
	\]
and condition \eqref{continuity} states the continuity of $\cA$. To show this, we first use the Cauchy-Schwarz inequality to derive
\begin{align}
  \langle A \vec \bu, \vec \bv \rangle &\le \| \vec \bu \|_A \| \vec \bv \|_A, &
  \langle C \vec p, \vec q \rangle &\le | \vec p |_C | \vec q |_C.\label{eq:boundAC}
\end{align}
It therefore remains to show that $B$ is continuous. Another application of the Cauchy-Schwarz inequality on the different terms together with a trace inequality provides this result. 

In order to prove the second relation \eqref{invertibility}, we aim to verify the assumptions of the Banach-Ne\v{c}as-Babu\v{s}ka (BNB) theorem (see, e.g., \cite{ern04}). In particular, we aim to prove that
\begin{align} \label{eq: BNB}
  \sup_{(\vec \bv, \vec p)}
  \frac{\langle \cA(\vec{\bu},\vec{p}),(\vec{\bv}, \vec q) \rangle }
  {\|(\vec{\bv}, \vec{q}) \|_{\bH_\epsilon}}
  &\gtrsim \|(\vec{\bu}, \vec{p}) \|_{\bH_\epsilon}, 
  & \forall (\vec{\bu}, \vec{p})&\in \bH_\epsilon.
\end{align}

We do this by assuming that $(\vec \bu, \vec p) \in \bH_\epsilon$ is given and by constructing an appropriate test function $(\vec \bv, \vec q) \in \bH_\epsilon$. Following, e.g., \cite{anaya21,lee17}, we choose $\vec{q}= - \vec{p}$ and $\vec{\bv}=\vec{\bu}$, giving 
\begin{align}\label{eq:step1}
  \langle \cA(\vec{\bu},\vec{p}),(\vec{\bu},-\vec{p}) \rangle &= 
  \langle A \vec{\bu}, \vec \bu \rangle
  + \langle B \vec{\bu}, \vec p \rangle
  - \langle B \vec{\bu}, \vec p \rangle
  + \langle C \vec p, \vec p \rangle \nonumber \\
  &=
  \| \vec{\bu} \|_A^2
  + | \vec{p} |_C^2. 
\end{align}

Next, the inf-sup condition proven in Lemma~\ref{lem: inf-sup B}, below, allows us to construct $\vec \bv_p$ such that
\begin{align} 
\langle B \vec \bv_p, \vec p \,\rangle
  &= | \vec p |_B^2, &
  \| \vec \bv_p \|_A 
  &\le \beta_0^{-1} | \vec p |_B.\label{eq:boundB}
\end{align}
We now use this test function, scaled by a constant $\delta > 0$ to be chosen later, and using \eqref{normA}-\eqref{normC} along with \eqref{eq:boundAC} and \eqref{eq:boundB}, we derive 
\begin{align*}
 \langle \cA(\vec{\bu}, \vec{p}),(\delta \vec{\bv}_p, \vec0) \rangle &= 
  \langle A \vec{\bu}, \delta \vec \bv_p \rangle
  + \delta [ B \vec \bv_p, \vec p ] 
  \nonumber \\
  &\ge - \delta \| \vec{\bu} \|_A \| \vec \bv_p \|_A
  + \delta | \vec p |_B^2
  \nonumber \\
  &\ge - \frac12 \| \vec{\bu} \|_A^2 
  - \frac12 \delta^2 \| \vec \bv_p \|_A^2 
  + \delta | \vec p |_B^2
  \nonumber \\
  &\ge - \frac12 \| \vec{\bu} \|_A^2 
  + (\delta - \frac12 \beta_0^{-2} \delta^2) | \vec p |_B^2,
\end{align*}
where we have also used Cauchy-Schwarz and Young's inequality. Setting $\delta = \beta_0^2$ gives us
\begin{align} \label{eq:step2}
  \langle \cA(\vec{\bu}, \vec{p}),(\delta \vec{\bv}_p, \vec0) \rangle
  &\ge - \frac12 \| \vec{\bu} \|_A^2 
  + \frac12 \beta_0^2 | \vec p |_B^2.
\end{align}

Finally, we take $(\vec \bv, \vec q) = (\vec \bu + \delta \vec \bv_p, - \vec p)$ and put together  \eqref{eq:step1} and \eqref{eq:step2} to arrive at 
\begin{align*}
  \langle \cA(\vec{\bu},\vec{p}),(\vec{\bv}, \vec q) \rangle
  &\ge
  \frac12 \| \vec{\bu} \|_A^2 
  + \frac12 \beta_0^2 | \vec p |_B^2
  + | \vec p |_C^2 \nonumber \\
  &\gtrsim \|(\vec{\bu}, \vec{p}) \|_{\bH_\epsilon}^2, \\
  \|(\vec{\bv}, \vec{q}) \|_{\bH_\epsilon}^2
  &\le 2 \left(\|(\vec{\bu}, \vec{p}) \|_{\bH_\epsilon}^2 + \delta^2 \| \vec \bv_p \|_A^2 \right) \nonumber \\
  &\le 2 \left(\|(\vec{\bu}, \vec{p}) \|_{\bH_\epsilon}^2 + \beta_0^2 | \vec p |_B^2 \right) \nonumber \\
  &\lesssim \|(\vec{\bu}, \vec{p}) \|_{\bH_\epsilon}^2.
\end{align*}
The combination of these two bounds shows that \eqref{eq: BNB} holds. The BNB theorem now provides \eqref{invertibility}.
\end{proof}

\begin{lemma} \label{lem: inf-sup B}
There exists a $\beta_0 > 0$ such that 
\[
\sup_{\cero \ne \vec{\bv}}
\frac{\langle B \vec \bv, \vec p \rangle}{\|\vec \bv\|_A}
\geq \beta_0 | \vec p |_B \qquad \forall \vec p \in \vec Q.
\]
\end{lemma}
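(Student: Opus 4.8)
The plan is to prove this inf-sup condition by an explicit Fortin-type construction: given $\vec p = (p_F, \varphi, p_P) \in \vec Q$, I would build a single test function $\vec \bv_p = (\bv, \bw) \in \vec \bV$ as a sum of three pieces, one tailored to each term of $|\vec p|_B$ in \eqref{normB}, so that $\langle B \vec \bv_p, \vec p \rangle \gtrsim |\vec p|_B^2$ and $\|\vec \bv_p\|_A \lesssim |\vec p|_B$ hold with constants independent of $\epsilon$. Dividing the first bound by $\|\vec \bv_p\|_A$ and using the second then produces a lower bound by $\beta_0 |\vec p|_B$, hence the supremum estimate. The guiding principle is to arrange each piece so that its $A$-norm contribution carries exactly the same parameter weight as the term of $|\vec p|_B$ it is designed to control.

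For the interface term I would use the duality realization in \eqref{eq:norm-H0p-dual}: pick $\tilde \eta \in H^{1/2}_{01}(\Sigma)$ with $\langle p_P|_\Sigma, \tilde \eta \rangle_\Sigma = \|p_P|_\Sigma\|^2_{-1/2,01,\Sigma}$ and $\|\tilde \eta\|_{1/2,01,\Sigma} = \|p_P|_\Sigma\|_{-1/2,01,\Sigma}$. Using the bounded surjectivity of the trace maps recalled in the Preliminaries (cf.\ \cite{holter20}), I would construct extensions $\bv_2 \in \bH^1_\star(\Omega_F)$ and $\bw_2 \in \bH^1_\star(\Omega_P)$ with prescribed normal traces $T_\nn \bv_2 = \tfrac{1}{2\mu_f}\tilde\eta$ and $T_\nn \bw_2 = -\tfrac{1}{2\mu_s}\tilde\eta$, and with vanishing tangential traces $T_\bt \bv_2 = T_\bt \bw_2 = \cero$ on $\Sigma$. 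This viscosity-weighted split is the crux: it yields $\langle p_P, T_\nn(\bv_2 - \bw_2)\rangle_\Sigma = (\tfrac{1}{2\mu_f}+\tfrac{1}{2\mu_s})\|p_P|_\Sigma\|^2_{-1/2,01,\Sigma}$ and simultaneously $2\mu_f\|\beps(\bv_2)\|^2_{0,\Omega_F} + 2\mu_s\|\beps(\bw_2)\|^2_{0,\Omega_P} \lesssim (\tfrac{1}{2\mu_f}+\tfrac{1}{2\mu_s})\|p_P|_\Sigma\|^2_{-1/2,01,\Sigma}$, so both quantities equal, up to a uniform constant, the third term of \eqref{normB}. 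Because the tangential traces vanish, the Beavers--Joseph--Saffman term in \eqref{normA} contributes nothing, which is precisely what removes the dangerous weight $\gamma\mu_f/\sqrt{\kappa}$ from the estimate.

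For the two remaining terms I would invoke the classical surjectivity of the divergence. Since $\Gamma_F^{\bsigma}$ and $\Gamma_P^{p_P}$ have positive measure in the configuration of Figure~\ref{fig:sketch} (left), the operator $\vdiv$ maps $\{\bv \in \bH^1_\star(\Omega_F): \bv|_\Sigma = \cero\}$ onto $L^2(\Omega_F)$, and analogously on $\Omega_P$, with purely geometric right-inverses \cite{girault-raviart}. I would then take $\bv_1$ vanishing on $\Gamma_F^{\bu}\cup\Sigma$ with $-\vdiv\bv_1 = \tfrac{1}{2\mu_f}p_F$ and $\|\beps(\bv_1)\|_{0,\Omega_F}\lesssim \tfrac{1}{2\mu_f}\|p_F\|_{0,\Omega_F}$, and likewise $\bw_1$ vanishing on $\Gamma_P^{\bd}\cup\Sigma$ with $-\vdiv\bw_1 = \tfrac{1}{2\mu_s}\varphi$. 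Their scalings match the weights of the first two terms of \eqref{normB}, and since they vanish entirely on $\Sigma$ they affect neither the interface pairing nor the tangential term.

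Setting $\vec \bv_p = (\bv_1 + \bv_2,\, \bw_1 + \bw_2)$, the pairing $\langle B\vec\bv_p,\vec p\rangle$ reproduces $|\vec p|_B^2$ up to the two cross terms $(\vdiv \bv_2, p_F)_{\Omega_F}$ and $(\vdiv \bw_2, \varphi)_{\Omega_P}$. By Cauchy--Schwarz and Korn these are bounded by $\tfrac{1}{2\mu_f}\|p_P|_\Sigma\|_{-1/2,01,\Sigma}\|p_F\|_{0,\Omega_F}$ and its Biot analogue, where the common weight $\tfrac{1}{2\mu_f}$ (resp.\ $\tfrac{1}{2\mu_s}$) is exactly the weight of both norms appearing, so that a Young inequality with a fixed parameter — after a mild rescaling of the pieces, in the same spirit as the choice $\delta=\beta_0^2$ in the proof of Theorem~\ref{theo:iso} — absorbs them while preserving $\langle B\vec\bv_p,\vec p\rangle \gtrsim |\vec p|_B^2$; the bound $\|\vec\bv_p\|_A \lesssim |\vec p|_B$ then follows by collecting the weight-matched contributions. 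I expect the main obstacle to be exactly the interface step: securing the extension operators in the tailored fractional space $H^{1/2}_{01}(\Sigma)$ with the prescribed weighted normal trace, vanishing tangential trace, and a bound independent of all of $\mu_f,\mu_s,\gamma,\kappa$. The divergence inf-sups are standard and geometric; it is the fractional-trace machinery together with the viscosity-weighted splitting that makes $\beta_0$ robust.
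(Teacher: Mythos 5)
Your construction is correct and follows essentially the same route as the paper's proof: both build the test function from divergence right-inverses that vanish on $\Sigma$ (so they cannot pollute the interface pairing) plus an extension of the Riesz representative of $p_P|_\Sigma \in [H^{1/2}_{01}(\Sigma)]'$ with purely normal trace and zero tangential trace (so the $\gamma\mu_f/\sqrt{\kappa}$ term drops out), with the same weight-matched scalings $(2\mu_f)^{-1}$ and $(2\mu_s)^{-1}$ on the two subdomains. The only substantive difference is that the paper realizes the interface part as a \emph{divergence-free} Stokes extension (the auxiliary problem \eqref{aux:p2}), so the cross terms $(\vdiv \bv_2, p_F)_{\Omega_F}$ and $(\vdiv \bw_2, \varphi)_{\Omega_P}$ that you must absorb by rescaling and Young's inequality vanish identically, yielding the exact identity $\langle B \vec \bv, \vec p\,\rangle = |\vec p|_B^2$ rather than an inequality.
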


\begin{proof} The proof  follows similarly to  \cite[Section 3]{boon21SD}. 
Let $\vec{p} = (p_F, \varphi, p_P) \in \vec Q$ be given. We proceed in five steps. 

\begin{enumerate}
  \item 
With a given total pressure $\varphi$ in the Biot domain, we set up an auxiliary Stokes problem: Find $(\bz_0, s_0) \in \bH^1(\Omega_P) \times L^2(\Omega_P)$ that weakly satisfy
\begin{subequations}
\begin{align*}
- \bdiv \left( \beps(\bz_0) + s_0 \bI \right) &= \cero,\\
\vdiv \bz_0 &= -\varphi & \text{in} \quad &\Omega_P,
\end{align*}
subject to the mixed boundary conditions
\begin{align*}
\bz_0 &= \cero \ \text{on} \quad  \Gamma_P^{\bd} \cup \Sigma, & \text{and} \quad 
\left( \beps(\bz_0) + s_0 \bI \right) \nn &= \cero \ \text{on} \ \Gamma_P^{p_P}.
\end{align*}
\end{subequations}
By the well-posedness of this auxiliary problem (for a proof see, e.g., \cite[Chapter I]{girault-raviart}), the first component of the solution satisfies  
\[
\|\bz_0 \|_{1,\Omega_P} \lesssim \| \varphi \|_{0, \Omega_P}.
\]
  \item
We next consider that  the trace of $p_P$ is a distribution in $H^{-1/2}(\partial\Omega_P)$, and focus on its restriction to the interface, $p_P|_{\Sigma}$, belonging to $[H_{01}^{1/2}(\Sigma)]'$ (cf., the end of Section~\ref{sec:intro}).

 Let $\zeta \in H_{01}^{1/2}(\Sigma)$ be the Riesz representative of $p_P|_{\Sigma} \in [H_{01}^{1/2}(\Sigma)]'$, and 
 consider a  Stokes-extension of $\zeta$ into $\Omega_P$ by setting up  another auxiliary Stokes problem (and still in the Biot domain): Find a pair $(\bz_1, s_1) \in \bH^1(\Omega_P) \times L^2(\Omega_P)$ that weakly satisfies 
\begin{subequations}\label{aux:p2}
\begin{align}
- \bdiv \left( \beps(\bz_1) + s_1 \bI \right) &= \cero,\\
\vdiv \bz_1 &= 0 & \text{in} \quad &\Omega_P, \nonumber
\end{align}
subject to the mixed-type boundary conditions
\begin{align}
\bz_1 &= \zeta \nn \ \text{on} \ \Sigma, \\
\bz_1 &= \cero \ \text{on} \ \Gamma_P^{\bd}, &
\left( \beps(\bz_1) + s_1 \bI \right) \nn &= \cero \ \text{on} \ \Gamma_P^{p_P}.\nonumber
\end{align}
\end{subequations}

Again, we use the well-posedness of the auxiliary problem (in this case, \eqref{aux:p2})  to conclude that the  extension function $\bz_1$ satisfies a continuous dependence on data 
\[
\|\bz_1 \|_{1,\Omega_P} \lesssim \| \zeta \|_{\frac12,01, \Sigma}  
= \| p_P \|_{-\frac12,01, \Sigma}.
\]

\item
We combine the two previous steps to form a \emph{Biot velocity} as $\bv_P := \frac{1}{2 \mu_s}(\bz_0 + \bz_1)$. By construction, this function has the following properties
\begin{align*}
  \langle B (\bv_P, \cero), \vec p \rangle
&= - (\vdiv \bv_P, \varphi)_{\Omega_P}
+ \langle\nn \cdot \bv_P, p_P\rangle_{\Sigma} \nonumber \\
&= \frac{1}{2 \mu_s} \left(- (\vdiv \bz_0, \varphi)_{\Omega_P}
  + \langle\nn \cdot \bz_1, p_P\rangle_{\Sigma}\right) \nonumber \\
 &= \frac{1}{2 \mu_s} \left(\| \varphi \|_{\Omega_P}^2 + \| p_P|_\Sigma \|_{-\frac12,01,\Sigma}^2 \right), \\
 \| (\bv_P, \cero) \|_A^2 
 &= 2\mu_s \|\beps(\bv_P)\|^2_{0,\Omega_P} \nonumber \\
 &\le \frac{1}{2 \mu_s} 2 \left( \| \bz_0 \|^2_{1,\Omega_P} + \| \bz_1 \|^2_{1,\Omega_P} \right) \nonumber \\
 &\lesssim \frac{1}{2 \mu_s} \left(\| \varphi \|_{\Omega_P}^2 + \| p_P|_\Sigma \|_{-\frac12,01,\Sigma}^2 \right).
\end{align*}

\item
Next, we repeat the first three steps with $p_F$ substituted for $\varphi$, $-\zeta$ for $\zeta$, $\Omega_F$ instead of $\Omega_P$ (as well as the relevant boundaries), and $\mu_f$ substituted for $\mu_s$. In particular, for step 2 we note that the Riesz representative $\zeta$ is constructed relatively to $\partial\Omega_P$, but the trace of $\bH_\star^1(\Omega_F)$ can be regarded as equivalent to the trace of $\bH_\star^1(\Omega_P)$ provided that the shapes and measures of each subdomains are similar (see \cite{galvis07,layton02}, where the argument is applied to a normal velocity trace on the interface). Then one can also consider $\zeta \in H_{01}^{1/2}(\Sigma)$, as relative  to $\partial\Omega_F$. 

In this case we end up with  a \emph{Stokes velocity}   $\bv_F$ satisfying the relations
\begin{align*}
\langle B (\bv_F,\cero), \vec p \rangle
&= \frac{1}{2 \mu_f} \left(\| p_F \|_{\Omega_F}^2 +\| p_P|_\Sigma \|_{-\frac12,01,\Sigma}^2 \right), \\
\| (\bv_F,\cero) \|_A^2 
&\lesssim \frac{1}{2 \mu_f} \left(\| p_F \|_{\Omega_F}^2 + \| p_P|_\Sigma \|_{-\frac12,01,\Sigma}^2 \right).
\end{align*}

\item For the final step it suffices to combine steps 3 and 4, and choose as test function the pair of functions constructed above $\vec \bv = (\bv_F, \bv_P)$, which leads to 
\begin{align} \label{eq: final properties}
 \langle B \vec \bv, \vec p \,\rangle
  &= | \vec p |_B^2, &
  \| \vec \bv \|_A^2 
  &\lesssim | \vec p |_B^2.
\end{align}
\end{enumerate}
Equation \eqref{eq: final properties} also contains the right scaling  through the use of the $|\cdot|_B$ seminorm. This step concludes the proof.
\end{proof}


According to the general operator preconditioning framework from \cite{mardal-winther}, 
Theorem~\ref{theo:iso} yields that 
a parameter-robust preconditioner can be constructed based on the 
Riesz map $\cRF :  \bH_\epsilon'\to \bH_\epsilon$ satisfying 
\[ \|\cRF\|_{\mathcal{L}(\bH_\epsilon', \bH_\epsilon)} \leq 1,\quad \|\cRF^{-1}\|_{\mathcal{L}(\bH_\epsilon, \bH_\epsilon')} \leq 1, \]
which implies that 
\[\mathrm{cond}(\cRF \mathcal{A}) = \|\cRF\mathcal{A}\|_{\mathcal{L}(\bH_\epsilon, \bH_\epsilon)} \|(\cRF\mathcal{A})^{-1}\|_{\mathcal{L}(\bH_\epsilon, \bH_\epsilon)} \leq C_1C_2.\]
Following Theorem~\ref{theo:iso} a natural block-diagonal preconditioner for the Biot-Stokes problem is
therefore the Riesz map with respect to the inner product in $\bH_\epsilon$ 
  \begin{equation}\label{eq:robust_precond}
    \cRF = 
      \left(\begin{array}{cc:ccc}
\!\!\!\!\mathcal{A}_{FF} & \!\!\mathcal{A}_{FP} \!\!&  & &\\
\!\!\!\!\mathcal{A}_{PF} & \!\!\mathcal{A}_{PP} \!\!& && \\[0.5ex]
\hdashline\\[-1.5ex]
&& \!\frac{1}{2\mu_f} I & & \\
&&& \!\!\!\!\!\!\bigl(\frac{1}{\lambda}+\frac{1}{2\mu_s}\bigr) I & \!\!\!\!\!-\frac{\alpha}{\lambda}I\\
&&&\!\!\!\!\!\!-\frac{\alpha}{\lambda}I & \!\!\!\!\!\!\!\bigl(C_0 + \frac{\alpha^2}{\lambda}\bigr) I - \frac{\kappa}{\mu_f} \Delta  + \frac{1}{\mu}(-\Delta_{\Sigma, 01})^{-\frac12}\!\!\!\!
\end{array}\right)^{-1}\!\!,
\end{equation}
  where we have defined $\mu^{-1}:=(2\mu_s)^{-1}+(2\mu_f)^{-1}$. We remark
  that the fractional operator $(-\Delta_{\Sigma, 01})^{-1/2}$ induces a norm
  on the space $[H^{1/2}_{01}(\Sigma)]'$  (see also \cite{arioli2009discrete} for the case of $[H^{1/2}_{00}(\Sigma)]'$).

\section{Discretization and robust preconditioners for Biot-Stokes}\label{sec:results}
\subsection{Preliminaries and accuracy verification}\label{sec:discrete}
In order to define a finite element method for the Biot-Stokes system \eqref{eq:weak} we will use two families of Stokes-stable elements. The 
generalized
Taylor-Hood ($\text{TH}_k$) type for the pairs [fluid velocity, fluid pressure] and [porous displacement, total pressure], while using continuous and 
piecewise polynomials of degree $k+1$ for the 
porous fluid pressure; 
as well as 
non-conforming discretizations based on the lowest-order Crouzeix-Raviart (CR) elements for the pairs  [fluid velocity, fluid pressure] and [porous displacement, total pressure] (using for velocity and displacement the facet stabilization described in \cite{burman05}, see also \eqref{eq:CR-biot}),  and continuous and piecewise linear elements for the Biot fluid pressure. 
A requirement is that the meshes for the Biot and Stokes subdomains match at the interface. For sake of completeness, the precise definition of the finite element subspaces is given in \ref{section:TH-CR}.
We remark that the non-conforming CR element discretization is not covered by the theory presented in this paper and is as such an interesting test case.

For the numerical realization of the methods discussed above, we have used the open source finite element library \texttt{FEniCS} \cite{alnaes15,logg12}, as well as the specialised module \texttt{FEniCS$_{ii}$} \cite{fenicsii} for handling subdomain- and boundary- restricted terms and variables.

We verify the error decay using the TH$_k$ spaces with polynomial degrees $k =1, 2$, and the
CR family. For this we simply use unity parameters. We consider synthetic forcing terms and
boundary data such that the exact manufactured solutions to \eqref{problem} are
\begin{equation}\label{eq:mms}
  \begin{aligned}
\bu &= \begin{pmatrix}
\cos(\pi x)\sin(\pi y)\\-\sin(\pi x)\cos(\pi y)\end{pmatrix}, \,
p_F = \exp(xy)+\cos(\pi x)\cos(\pi y),\\
\bd &= \begin{pmatrix} \cos(\pi x)\sin(\pi y)+\frac{y(x-0.5)}{\lambda}\\-\sin(\pi x)\cos(\pi y)\end{pmatrix}, \,
p_P = \cos(\pi(x^2+y^2)), \, \varphi = \alpha p_P - \lambda\vdiv\bd.
\end{aligned}
\end{equation}
Note that these exact solutions require non-homogeneous transmission conditions. 

We construct a series of uniformly successively refined triangular meshes for $\Omega=(0,1)^2$, defining the interface as the  segment $\{0.5\}\times (0,1)$ and considering the left half of the domain as $\Omega_F$ and the right half as $\Omega_P$. Then, we proceed to measure individual errors between closed-form and approximate solutions in the  usual norms  
\begin{gather*}
e(\bu) =\|\bu-\bu_h\|_{1,\Omega_F},\quad  
e(p_F) =\|p_F-p_{F,h}\|_{0,\Omega_F},\\
 e(\bd) =\|\bd-\bd_h\|_{1,\Omega_P}, \quad 
 e(\varphi) =\|\varphi-\varphi_h\|_{0,\Omega_P},\quad 
 e(p_P)=\|p_P-p_{P,h}\|_{1,\Omega_P}.\end{gather*}

Figure \ref{fig:cvrg} reports the approximation errors for the three discretizations. In all cases the expected order $k+1$ for $\text{TH}_k$ can be observed. For the CR family we obtain the expected linear convergence.

\begin{figure}[t!]
  \includegraphics[width=0.32\textwidth]{./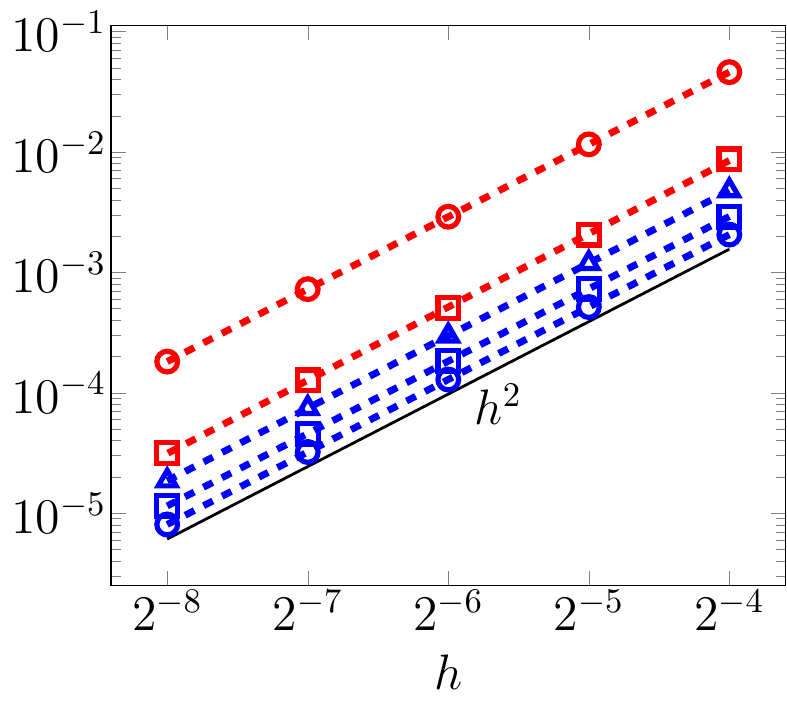}
  \includegraphics[width=0.32\textwidth]{./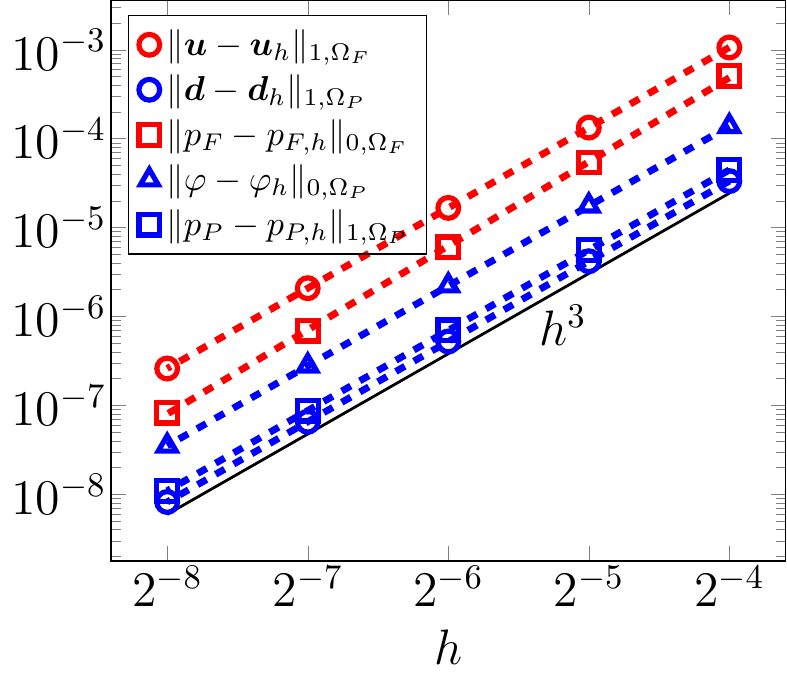}
  \includegraphics[width=0.32\textwidth]{./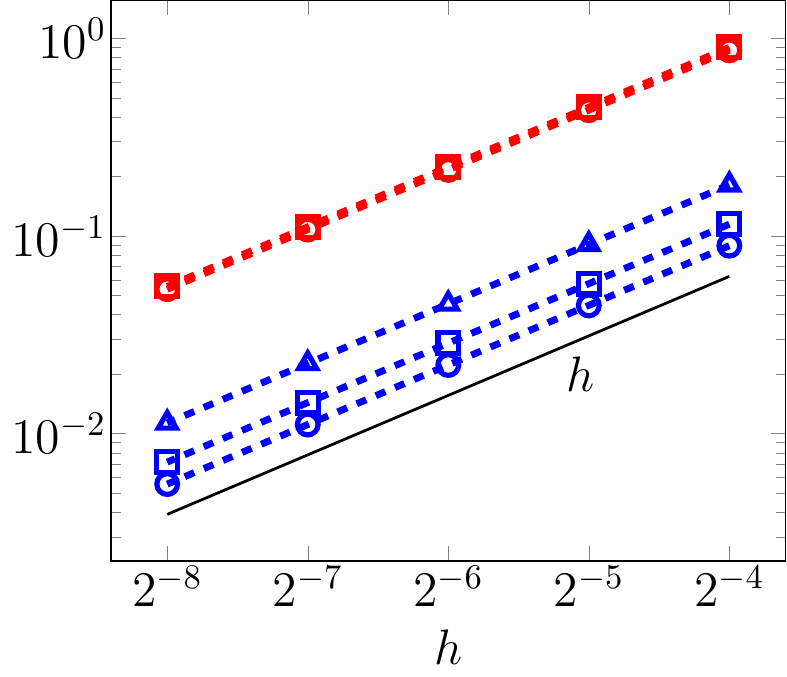}  
  \label{fig:cvrg}
  \vspace{-10pt}
  \caption{Error convergence of the finite element approximation 
    of \eqref{eq:weak} with the manufactured solution \eqref{eq:mms} using (left) $\text{TH}_1$, (center)
    $\text{TH}_2$ and (right) CR families for discretization. Optimal convergence is observed
    in all cases. The legend is shared between the subplots.
  }
\end{figure}

Having defined suitable finite element discretization for the Biot-Stokes
system, we next investigate robustness of the fractional preconditioner
\eqref{eq:robust_precond} which was established theoretically in Theorem
\ref{theo:iso} with the assumption of specific boundary conditions
on the sub-boundaries intersecting the interface, namely, that $\Sigma$ meets the intersection between the Biot displacement and the Stokes velocity boundaries. However,
the theory and in turn the $\cRF$ preconditioners can be extended to more general
boundary conditions as we will demonstrate by the numerical experiments. In
particular, in Section \ref{sec:num_robust_neumann} we consider the setup
where the interface intersects boundaries $\Gamma^{\bsigma}_F$, $\Gamma^{p_P}_P$
see Figure \ref{fig:sketch}. Then, in Section \ref{sec:num_brain} the interface
is a closed curve.

Due to the Laplace operator on the interface, the discretization of
preconditioners $\cRF$ \eqref{eq:robust_precond} is not immediately evident.
Before discussing the results let us therefore comment on the construction of the critical
component, that is, the fractional operators.

\subsection{Discrete preconditioner}\label{sec:discrete_precond} From \eqref{eq:robust_precond} we observe 
that in the pressure block of the preconditioner $\cRF$ the operator acting on $(\varphi, p_P)$,
\begin{equation}\label{eq:pressure_precond}
\begin{pmatrix}\bigl(\frac{1}{\lambda}+\frac{1}{2\mu_s}\bigr) I & -\frac{\alpha}{\lambda}I\\
  -\frac{\alpha}{\lambda}I & \bigl(C_0 + \frac{\alpha^2}{\lambda}\bigr) I - \frac{\kappa}{\mu_f} \Delta  + \mu^{-1}(-\Delta_{\Sigma, 01})^{-\frac12}\\
\end{pmatrix}^{-1},
\end{equation}
contains a sum of a bulk term coupling the two pressures and a fractional interface term
$\mu^{-1}(-\Delta_{\Sigma, 01})^{-1/2}$. Thus, the action of the operator implicitly 
involves the trace of the Biot pressure at the interface $\Sigma$. In the discrete
setting mirroring this property requires a discrete trace space $S_h$ and a
restriction operator. In the following we choose $S_h$ 
as the space of piecewise continuous polynomials of order $k+1$ whenever the $\text{TH}_k$ family is used.
For the CR family, $S_h$ is constructed from piecewise continuous linear functions.
The restriction operator is then realized as an $L^2$-projection.

Once on the interface we approximate the fractional operator based on the
spectral decomposition, see, e.g., \cite{kuchta2016preconditioners}. That is,
\begin{equation}\label{eq:fract_laplace}
\langle \mu^{-1}(-\Delta_{\Sigma, 01})^{-1/2}u, v \rangle_{\Sigma} := \sum_{i}\lambda^{-1/2}_i(\mu^{-1} u_i, u)_{\Sigma}(\mu^{-1} u_i, v)_{\Sigma} \quad \forall u, v \in S_h,
\end{equation}
where $(u_i, \lambda_i)\in S_h\times\mathbb{R}$ are solutions of the generalized
eigenvalue problem 
\begin{equation}\label{eq:eigv}
(\mu^{-1} \nabla u_i, \nabla v)_{\Sigma}=\lambda_i (\mu^{-1}u_i, v)_{\Sigma}\quad\forall v\in S_h,
\end{equation}
satisfying the orthogonality condition $(\mu^{-1}u_i, u_j)_{\Sigma}=\delta_{ij}$. 
Note that in \eqref{eq:eigv} the Dirichlet boundary conditions are prescribed on
$\partial\Sigma$ reflecting the trace spaces of velocity and displacement
when $\Gamma^{\bu}_F$ and $\Gamma^{\bd}_P$ are incident to $\Sigma$ (as was
assumed in Theorem \ref{theo:iso}). 

Going beyond the theoretical analysis, we demonstrate in Section \ref{sec:num_robust_neumann}
that for the configuration with $\Gamma^{\bsigma}_F$, $\Gamma^{p_P}_P$ intersected by the interface,
the operator \eqref{eq:pressure_precond} (and in turn the preconditioner \eqref{eq:robust_precond})
needs to be modified. Specifically, the fractional term then reads
$\mu^{-1}(-\Delta_{\Sigma}+I_{\Sigma})^{-1/2}$. The operator is defined analogously
to \eqref{eq:fract_laplace}, where, in contrast, the $H^1$-norm 
(cf. the $H^1$-seminorm in \eqref{eq:eigv}) is now used in the eigenvalue problem:
Find $(u_i, \lambda_i)\in (S_h, \mathbb{R})$ such that 
\begin{equation}\label{eq:eigv_neumann}
  (\mu^{-1} \nabla u_i, \nabla v)_{\Sigma} + (\mu^{-1} u_i, v)_{\Sigma}=\lambda_i (\mu^{-1}u_i, v)_{\Sigma}\quad\forall v\in S_h
\end{equation}
and $(\mu^{-1} u_i, u_j)_{\Sigma}=\delta_{ij}$. Note that here the Neumann boundary
conditions are prescribed on $\partial\Sigma$.

We remark that the fractional operators and in particular the boundary
conditions in \eqref{eq:eigv} and \eqref{eq:eigv_neumann} must be set based
on the configuration of the boundaries with respect to the interface. The
fact that the conditions cannot be chosen freely is investigated next in Example 
\ref{rmrk:nitsche} together with the observation that parameter stability of
the preconditioners \eqref{eq:robust_precond} is affected by enforcement
of Dirichlet boundary conditions in construction of the fractional operators
via the eigenvalue problems \eqref{eq:eigv} and \eqref{eq:eigv_neumann}.

\begin{example}[Boundary conditions in fractional operators]\label{rmrk:nitsche}
  In the following, given the geometry of Example \ref{ex:naive}, let $C_0=0$, $\kappa=10^{-10}$
  while the remaining problem parameters of the the Biot-Stokes system
  \eqref{eq:weak} are set to unity. This choice is made to put emphasis on the fractional
  term in the preconditioner \eqref{eq:robust_precond}.

  Assuming first that $\Sigma$ intersects $\Gamma^{\bsigma}_F$ and $\Gamma^{p_P}_P$,
  we consider \eqref{eq:weak} either with a preconditioner \eqref{eq:robust_precond},
  where Dirichlet boundary conditions are (strongly) enforced on the fractional operator, or
  a modified preconditioner which uses the operator $\mu^{-1}(-\Delta_{\Sigma}+I_{\Sigma})^{-1/2}$
  constructed with the Neumann boundary conditions, see \eqref{eq:eigv_neumann}.
  Using a discretization by $\text{TH}_1$ we observe in Table \ref{tab:nitsche} that 
  the Dirichlet conditions result in a lack of boundedness in the mesh size. On the
  other hand, when Neumann boundary conditions are imposed on the fractional operator the
  spectral condition numbers of the preconditioned problem seem to converge along with mesh 
  refinement.

  Repeating the experiment for the configuration where the interface is incident
  to $\Gamma^{\bu}_F$ and $\Gamma^{\bd}_P$, it can be seen that Neumann boundary conditions
  lead to a growth similar to what was observed in the previous setup with Dirichlet datum.
  Furthermore, in Table \ref{tab:nitsche} the condition numbers blow up also with
  $\mu^{-1}(-\Delta_{\Sigma, 01})^{-1/2}$. However, in this case the growth can be 
  traced to the (two) eigenvalues that correspond to the degrees of freedom\footnote{
    The number of unbounded modes is finite (and independent of refinement)
    when $\Sigma$ is a curve. However,
    when the interface is a manifold in 3$d$ the number of unbounded modes
    grows with $h$ (as $\partial\Sigma$ is refined).
  }
  of the space $S_h$ on $\partial\Sigma$ which are set strongly by the Dirichlet boundary conditions.
  This observation motivates using the Nitsche technique \cite{Nitsche1971berEV} in
  order to enforce the conditions on $\mu^{-1}(-\Delta_{\Sigma, 01})^{-1/2}$. 
  With a suitably chosen Nitsche penalty parameter, Table \ref{tab:nitsche} (column D\textsubscript{Nitsche})
  reveals that \eqref{eq:robust_precond} yields mesh independence. 
  
  The fact that for $\Sigma$ intersecting $\Gamma^{\bu}_F$ and $\Gamma^{\bd}_P$
  the numerical issues with Dirichlet boundary conditions of the fractional operators are
  related to their strong enforcement, can be further illustrated
  using a discretization for which the intermediate trace space $S_h$ has no
  degrees of freedom on the boundary $\partial\Sigma$. To this end, we here  consider a modification of the
  $\text{CR}$ family where the Biot fluid pressure space is made of piecewise constants.
  The discrete Laplace operator in \eqref{eq:weak} as well as in the eigenvalue
  problem \eqref{eq:eigv} is then defined analogously to the finite volume method, 
  and in particular utilizing two-point flux approximation, see, e.g., \cite{Droniou2017}.
  After using $\mu^{-1}(-\Delta_{\Sigma, 01})^{-1/2}$ with Dirichlet boundary conditions enforced
  weakly, stable condition numbers are achieved, as observed in column $\text{CR}^{\sharp}$ of Table
  \ref{tab:nitsche}.

  In order to show that strong enforcement of the Dirichlet boundary conditions can be used depending
  on the boundary configuration, we finally consider a setup where the interface
  meets $\Gamma^{\bu}_F$ on the Stokes side while on the incident Biot boundary (which we denote by  $\Gamma^{\dagger}_P$) 
  we assume Dirichlet data on the displacement $\bd$ and on the pressure $p_P$.
  Then, using a 
  $\text{TH}_1$ discretization together with the preconditioner \eqref{eq:robust_precond}, bounded
  condition numbers are produced, which can be observed in the last column of Table \ref{tab:nitsche}.
  
\begin{table}[t!]
  \centering
  \footnotesize{
    \begin{tabular}{c|cc||ccc|c||c}
    \hline
    \multirow{2}{*}{$\log_2 h^{-1}$} & \multicolumn{2}{c||}{$\Gamma^{\bsigma}_F, \Gamma^{p_P}_P$} & \multicolumn{4}{c||}{$\Gamma^{\bu}_F, \Gamma^{\bd}_P$} & $\Gamma^{\bu}_F, \Gamma^{\dagger}_P$\\[0.5ex]
    \cline{2-8}
    & D & N & D & N & D\textsubscript{Nitsche} & D with {$\text{CR}^{\sharp}$} & D\\
    \hline      
2 & 10.61 & 16.67 & 3369   &24.48 &7.02 &8.07 & 6.77\\
3 & 12.17 & 17.58 & 13879  &30.29 &7.43 &8.45 & 7.31\\
4 & 13.90 & 18.12 & 56254  &35.91 &7.59 &8.60 & 7.53\\
5 & 15.73 & 18.53 & --     &41.66 &7.67 &8.60 & 7.64\\
6 & 17.63 & 18.83 & --     &47.66 &7.71 &8.57 & 7.69\\
7 & 19.58 & 19.06 & --     &53.96 &7.72 &8.54 & 7.71\\
\hline
  \end{tabular}
  }
  \smallskip 
  \caption{
    Spectral condition numbers for the Biot-Stokes problem \eqref{eq:A} with
    fractional preconditioners \eqref{eq:robust_precond}. Boundary condition
    configurations from Figure \ref{fig:sketch} are considered; $\Sigma$
    intersects $\Gamma^{\bsigma}_F$ and $\Gamma^{p_P}_P$ or $\Gamma^{\bu}_F$ and $\Gamma^{\bd}_P$.
    In addition, on $\Gamma^{\dagger}_P$ we prescribe both $\bd$ and $p_P$.
    The fractional preconditioners differ by the boundary conditions enforced
    on $\partial\Sigma$; Dirichlet condition, cf. \eqref{eq:eigv}, enforced 
    strongly (D) or with Nitsche's method (D\textsubscript{Nitsche}) or Neumann condition (N),
    cf. \eqref{eq:eigv_neumann}. Systems are discretized by $\text{TH}_1$ family except
    for $\text{CR}^{\sharp}$ where the \emph{modified} $\text{CR}$ family is used with
    the Biot pressure space constructed from piecewise constant functions. For the
    finest refinement level the discrete linear system contains approximately 350 thousand degrees of freedom.
  }
  \label{tab:nitsche}
\end{table}
\end{example}

We finally remark that the spectral realization \eqref{eq:fract_laplace} is not scalable
to problems where the interface and the trace space are large. However, the representation
is well suited for the applications pursued here, specifically the robustness study
where we are interested in exact (inverted by LU) preconditioners. 

\subsection{Parameter sensitivity}\label{sec:num_robust}
We demonstrate robustness of the fractional preconditioner \eqref{eq:robust_precond} by a sensitivity
study where the physical parameters in \eqref{eq:weak} are varied such
that $10^{-9} \leq \mu_f, \kappa \leq 1$, $1\leq\lambda\leq 10^{12}$,
$10^{-2}\leq \gamma\leq 10^2$, $10^{-8}\leq \alpha \leq 1$. Since $\mu_s$ is
commonly used for rescaling we fix its value to 1. Moreover, the storage
capacity is set to 0 as this is the more challenging limit of the parameter's
range. The Biot-Stokes system is then considered on the geometry from Example
\ref{ex:naive} with the boundary configuration satisfying the assumptions
in Theorem \ref{theo:iso}. That is, the interface intersects $\Gamma^{\bu}_F$ on
the Stokes side and $\Gamma^{\bd}_P$ on the Biot side.
In turn, the fractional
operator is constructed as given in \eqref{eq:fract_laplace}. Finally, following
Example \ref{ex:naive}, the convergence criterion for the MinRes solver is a
reduction of the preconditioned residual norm by factor $10^{8}$. The action of
the preconditioner \eqref{eq:robust_precond} is then computed by LU factorization
of the $2\times 2$ velocity-displacement and the $3\times 3$ pressure blocks, respectively.

Using $\text{TH}_1$ elements, Figures \ref{fig:iters_DIR_TH_loop_alpha_C00} and
\ref{fig:iters_DIR_TH_loop_gamma_C00} present slices of the explored parameter
space. More precisely, in each subplot column-indexed by fixed value of $\mu_f$
and row-indexed by scalar permeability $\kappa$ we plot dependence
of the MinRes iterations on system size for varying Lam{\'e} parameter $\lambda$ (indicated
by color), the Biot-Willis coefficient $\alpha$ (in Figure \ref{fig:iters_DIR_TH_loop_alpha_C00}, $\gamma$ is set to 1)
and the slip-rate coefficient $\gamma$ (in Figure \ref{fig:iters_DIR_TH_loop_gamma_C00}, $\alpha$ is set to 1).
We observe that the iterations are stable for all the parameter combinations.
In fact, the iterations remain bounded between $21$-$56$ across the entire considered parameter range.

\begin{figure}[t!]
  \includegraphics[width=\textwidth]{./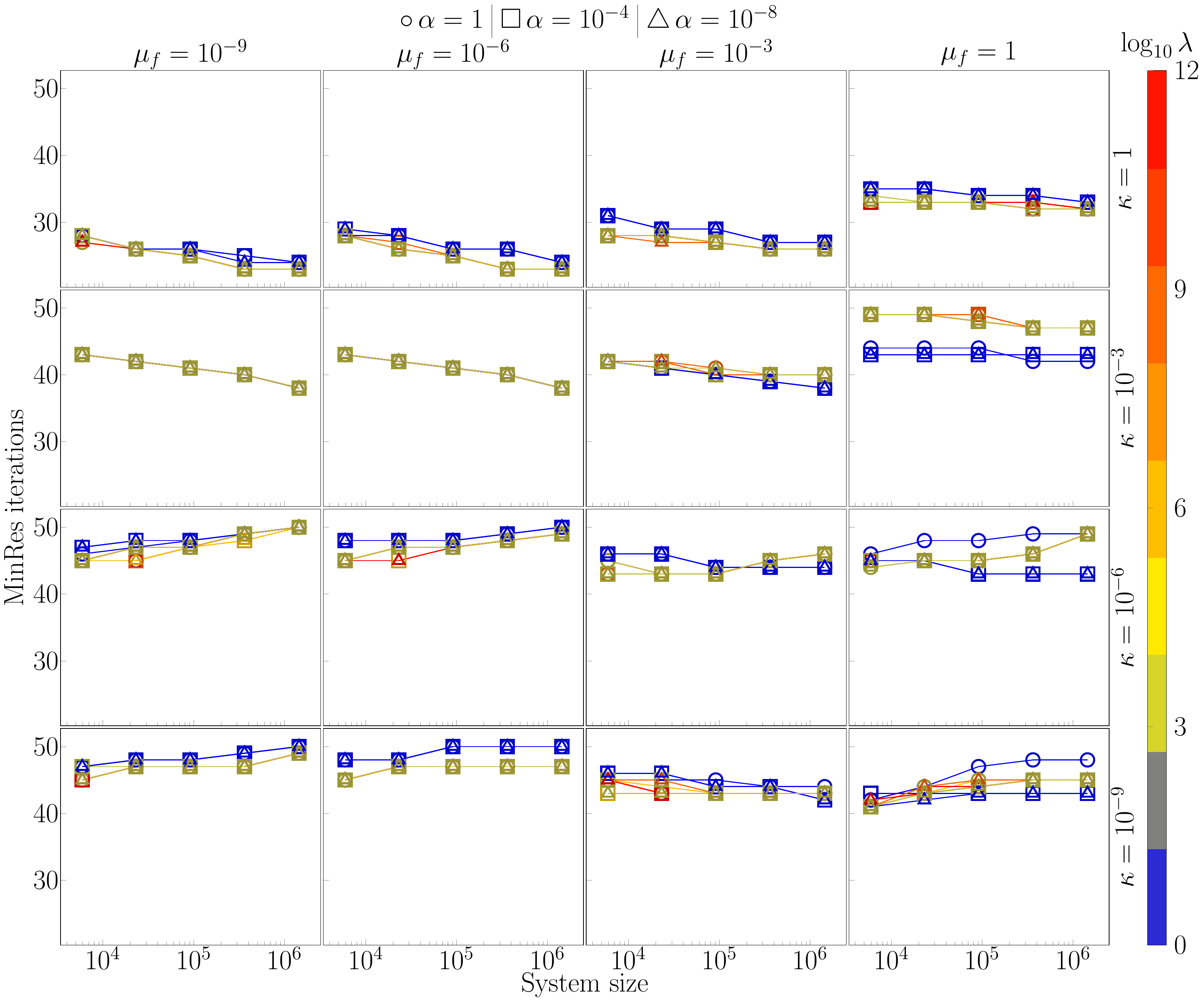}
  \vspace{-10pt}
  \caption{Performance of the Biot-Stokes preconditioner \eqref{eq:robust_precond}.
    Geometry of Example \ref{ex:naive} is used with $\Sigma$ intersecting $\Gamma^{\bu}_F$ and
    $\Gamma^{\bd}_P$. We set $\mu_s$, $\gamma$ to 1 while
    $C_0=0$. The parameters $\mu_f$, $\kappa$, $\lambda$, $\alpha$ are varied. Values
    of  the Biot-Willis coefficient are indicated by markers. In this case, the discretization uses 
    $\text{TH}_1$ elements. 
    }
  \label{fig:iters_DIR_TH_loop_alpha_C00}
\end{figure}

\begin{figure}[t!]
  \centering
  \includegraphics[width=\textwidth]{./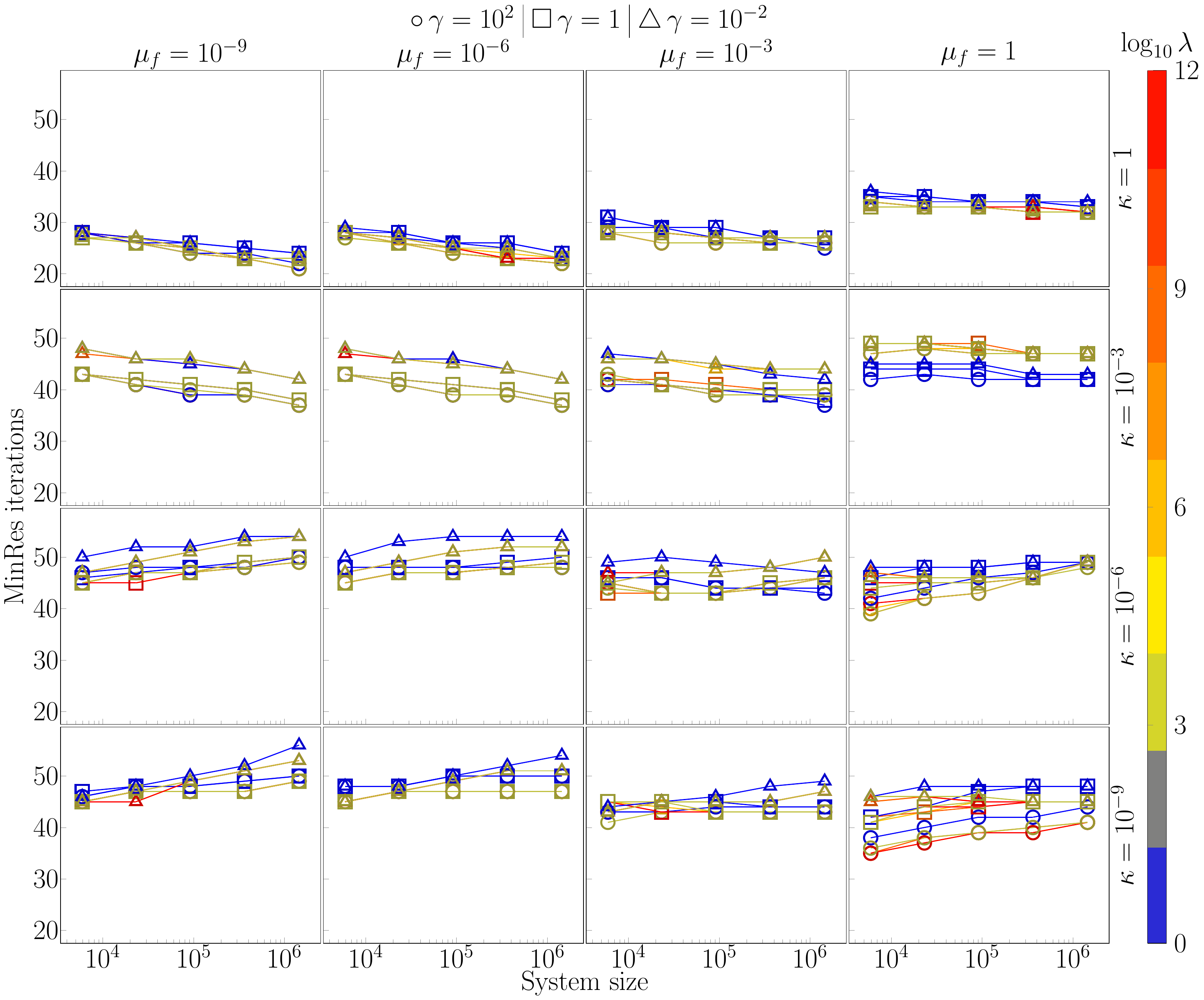}
  \vspace{-10pt}
  \caption{
    Performance of the Biot-Stokes preconditioner \eqref{eq:robust_precond}.
    The problem is setup on the geometry from Example \ref{ex:naive} with boundary conditions
    prescribed such that $\Sigma$ intersects $\Gamma^{\bu}_F$ and $\Gamma^{\bd}_P$.
    We set $\mu_s$, $\alpha$ to 1 while $C_0=0$. Parameters $\mu_f$, $\kappa$, $\lambda$, $\gamma$ are varied.
    Values of the slip-rate coefficient $\gamma$ are indicated by markers. A discretization by
    $\text{TH}_1$ is used.    
    }
  \label{fig:iters_DIR_TH_loop_gamma_C00}
\end{figure}

\subsection{Other boundary configurations}\label{sec:num_robust_neumann}
The Biot-Stokes preconditioner \eqref{eq:robust_precond} can be
extended beyond the boundary configurations assumed in the theoretical analysis,
namely, the requirement that $\Sigma$ is incident to $\Gamma^{\bu}_F$ and $\Gamma^{\bd}_P$.
We illustrate this here by letting the interface intersect the boundaries $\Gamma^{\bsigma}_F$, $\Gamma^{p_P}_P$.
Following Section \ref{sec:discrete_precond}, the configuration leads to the
fractional operator $\mu^{-1}(-\Delta_{\Sigma}+I_{\Sigma})^{-1/2}$, see \eqref{eq:eigv_neumann}.

Employing the experimental setup of Section \ref{sec:num_robust} the performance 
of the \eqref{eq:eigv_neumann}-adapted preconditioner \eqref{eq:robust_precond}
is illustrated in Figure \ref{fig:iters_TH_loop_alpha_C00} where the slice of
the parameter space for $C_0=0$, $\gamma=1$ is shown (cf. Figure \ref{fig:iters_DIR_TH_loop_alpha_C00}
where $\Gamma$ is incident to $\Gamma^{\bu}_F$ and $\Gamma^{\bd}_P$). Using
the preconditioner the iterations remain bounded between $23$ and $58$.

For CR family, the MinRes iterations are reported in Figure \ref{fig:iters_TH_loop_alpha_C0_0}.
Here, for the sake of brevity we present results only for the stronger
tangential coupling, i.e. $\gamma$ is fixed at $10^2$ and we explore robustness for
varying $\mu_f$, $\kappa$, $\lambda$ and $\alpha$. It can be seen that the
fractional preconditioner leads to bounded iterations (between $22$ and $57$ iterations are required for convergence).
We remark that the stability of CR discretization for the three-field Biot formulation is explored in \ref{sec:CR}.

\begin{figure}[t!]
  \includegraphics[width=\textwidth]{./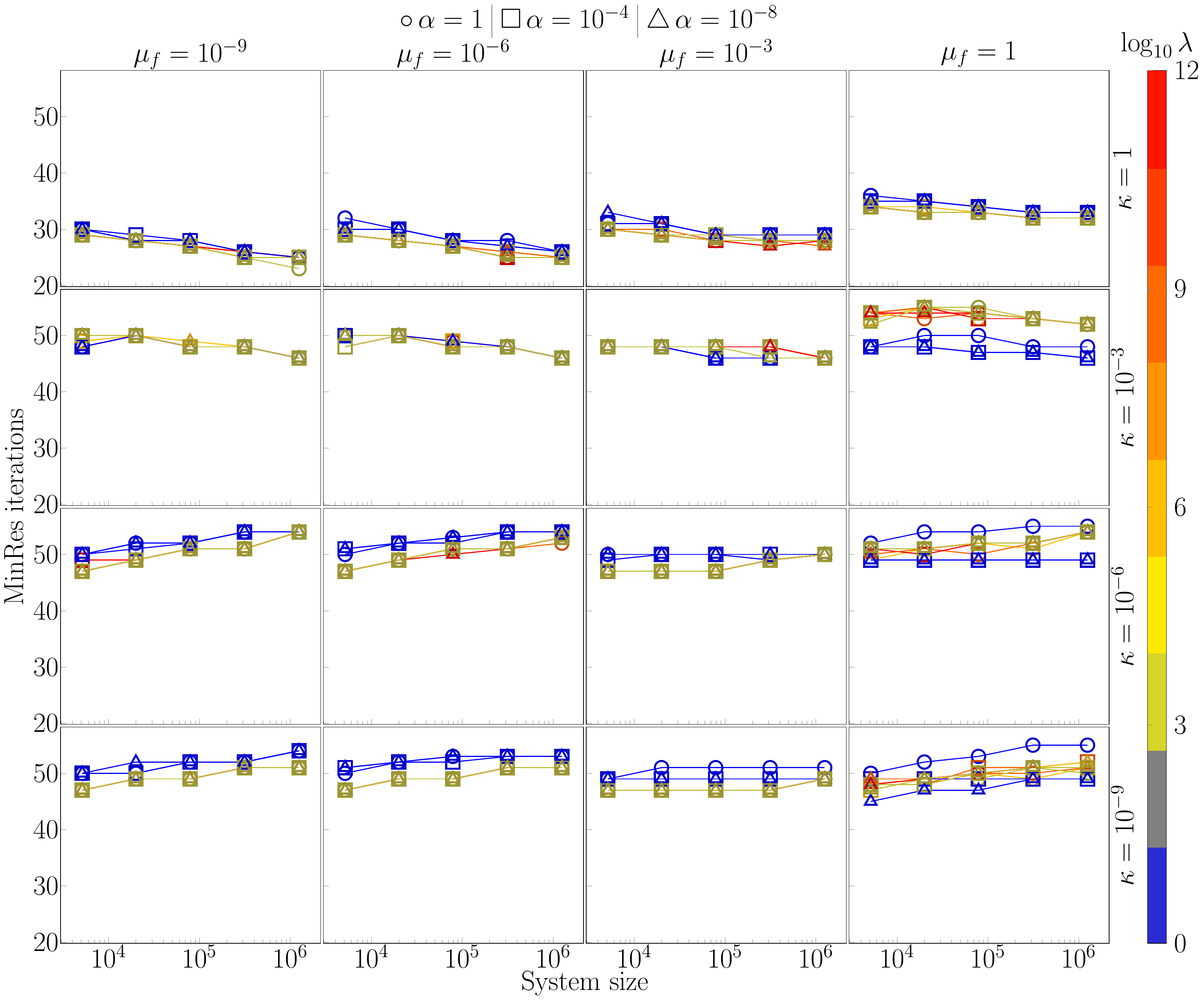}
  \vspace{-10pt}
  \caption{Performance of the Biot-Stokes preconditioner \eqref{eq:robust_precond}.
    Geometry of Example \ref{ex:naive} is used with $\Sigma$ intersecting $\Gamma^{\bsigma}_F$ and
    $\Gamma^{p_P}_P$. The fractional operator is changed to $\mu^{-1}(-\Delta_{\Sigma}+I_{\Sigma})^{-1/2}$.
    We set $\mu_s$, $\gamma$ to 1 while
    $C_0=0$. The parameters $\mu_f$, $\kappa$, $\lambda$, $\alpha$ are varied. Values
    of  the Biot-Willis coefficient are indicated by markers. In this case, the discretization uses 
    $\text{TH}_1$ elements. 
    }
  \label{fig:iters_TH_loop_alpha_C00}
\end{figure}

\subsection{Diagonal pressure preconditioner}\label{sec:diagonal}
From the point of view of computational efficiency a possible drawback\footnote{
In addition to the presence of the fractional operator on the interface.
}
of preconditioner \eqref{eq:robust_precond}, is
the fact that due to the $C$-seminorm in \eqref{eq:product-norm} the pressure block
contains a $2\times 2$ operator \eqref{eq:pressure_precond}. However, for the three-field
Biot problem, the authors in \cite{lee17} show that parameter robustness can be obtained also
if $(\varphi, p_P)$ are controlled in a simpler norm, cf. the Biot block of the operator $\cRD$ in \eqref{eq:precond_diagonal}.
Following this observation we next consider a Biot-Stokes preconditioner of the form 
\begin{equation}\label{eq:robust_precond_diag}
    \tilde{\cRF} = 
      \left(\begin{array}{cc:ccc}
\!\!\!\!\mathcal{A}_{FF} & \!\!\mathcal{A}_{FP}\!\! &  & &\\
\!\!\!\!\mathcal{A}_{PF} & \!\!\mathcal{A}_{PP}\!\! & && \\[0.5ex]
\hdashline\\[-1.5ex]
&&  \frac{1}{2\mu_f} I & & \\
&&& \!\!\!\!\!\!\bigl(\frac{1}{\lambda}+\frac{1}{2\mu_s}\bigr) I & \\
&&& & \!\!\!\!\!\!\!\bigl(C_0 + \frac{\alpha^2}{\lambda}\bigr) I - \frac{\kappa}{\mu_f} \Delta  + \frac{1}{\mu}(-\Delta_{\Sigma, 01})^{-\frac12}\!\!\!\!
\end{array}\right)^{-1},
\end{equation}
and we note that the pressure block is diagonal.

Using the computational setup of Section \ref{sec:num_robust} we compare
the preconditioner \eqref{eq:robust_precond} derived in Theorem \ref{theo:iso}
with the \cite{lee17}-inspired operator $\tilde{\cRF}$ \eqref{eq:robust_precond_diag}. For the
simple geometry of Example \ref{ex:naive} the operators are compared in
Figure \ref{fig:iters_TH_loop_alpha_C00_compare}. We observe that the
preconditioners yield practically identical number of MinRes iteration counts,
except for $\lambda=1$, $\alpha=1$ where \eqref{eq:robust_precond_diag}
requires more iterations for convergence.

\begin{figure}[t!]
  \includegraphics[width=\textwidth]{./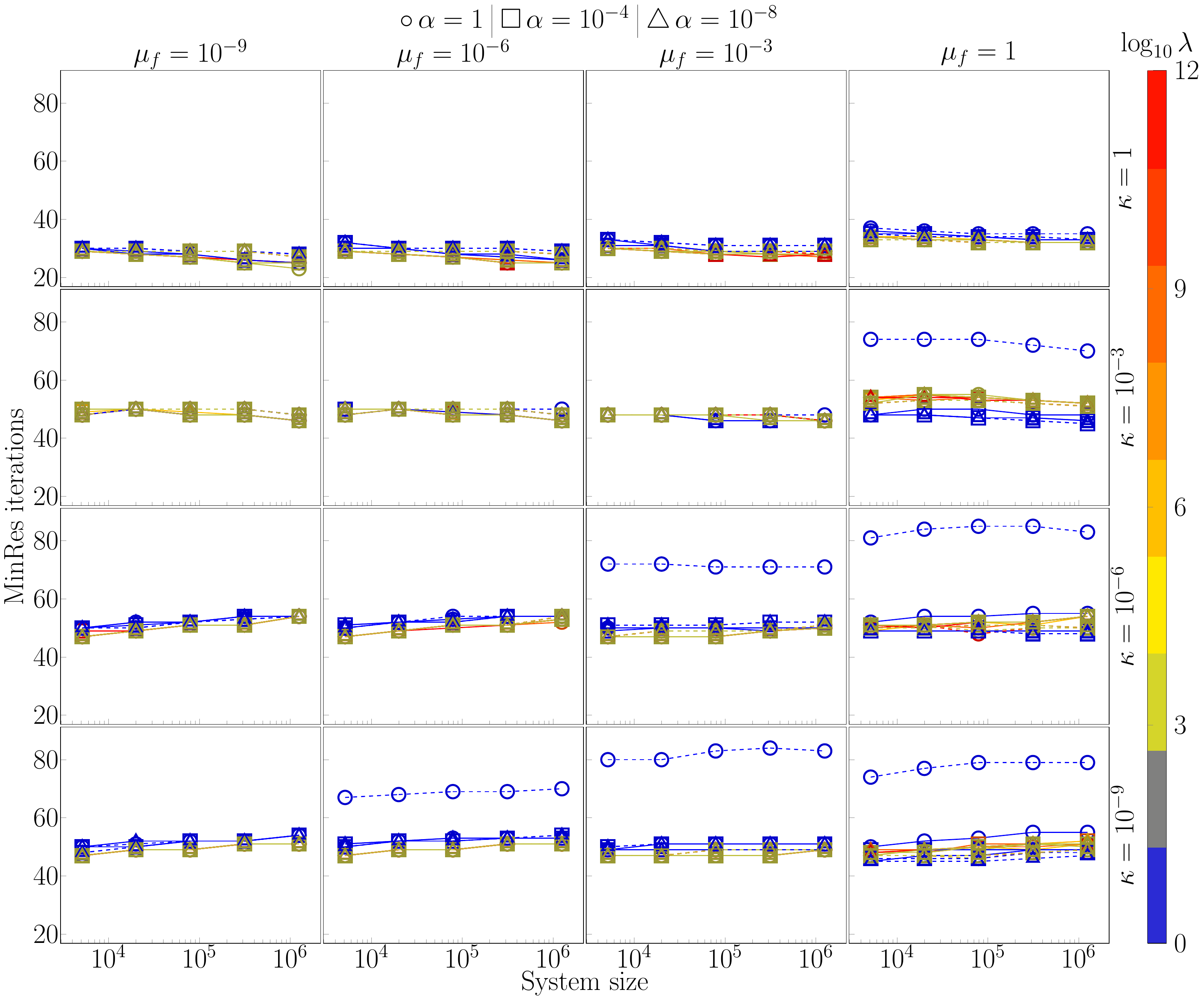}
  \vspace{-10pt}
  \caption{
    Comparison of Biot-Stokes preconditioners \eqref{eq:robust_precond} (solid lines)
    and \eqref{eq:robust_precond_diag} (dashed lines).  
    Geometry of Example \ref{ex:naive} is used with $\Sigma$ intersecting $\Gamma^{\bsigma}_F$ and
    $\Gamma^{p_P}_P$. The fractional operator in both preconditioners is changed to $\mu^{-1}(-\Delta_{\Sigma}+I_{\Sigma})^{-1/2}$.
    We set $\mu_s$, $\gamma$ to 1 while
    $C_0=0$. The parameters $\mu_f$, $\kappa$, $\lambda$, $\alpha$ are varied. Values
    of  the Biot-Willis coefficient are indicated by markers. The system is discretized 
    by $\text{TH}_1$ elements. 
    }
  \label{fig:iters_TH_loop_alpha_C00_compare}
\end{figure}

\subsection{Interfacial flow in the brain}\label{sec:num_brain}  
In the examples presented thus far the fractional preconditioners were applied
to the Biot-Stokes problem posed in geometries where the interface formed a 
simple curve (a straight segment in fact). In addition, the number of degrees of freedom associated with
$\Sigma$, and in turn the discrete fractional operators, were small.

To apply the proposed fractional preconditioners in a more practical
setting our final example concerns the interfacial flow in a brain. As
realistic brain geometries currently cannot be tackled with our
approach \eqref{eq:fract_laplace} due to the size of the interface\footnote{
  The coarsest yet still reasonably well resolved surface mesh of a $3d$ brain at
  our disposal has circa 50 thousand cells. The resulting eigenvalue
  problem is roughly 4 times larger than what can be computed on a computer with 24GB RAM.
} we choose the problem geometry as  two-dimensional slices, see Figure \ref{tab:brain}, left panels. 
We then model flow of a water-like fluid in free-flow domain that is the
space surrounding the brain known as the subarachnoid space and in the
poroelastic domain that is the brain parenchyma. We remark that the interface thus forms
a closed surface. The material parameters of the Biot model
are set following \cite{BUDDAY2015318} and we fix the slip-rate coefficient as $\gamma=1$.
The boundary of the outer spaces is assumed impermeable with $\bu=\cero$ prescribed
on most of the surface, except for regions (marked with red and orange in 
Figure \ref{tab:brain}, left) where traction is set in order to drive the flow. 

\begin{figure}[t!]
  \centering
  \renewcommand{\arraystretch}{1.3}
  \qquad\begin{minipage}{0.35\textwidth}
    \includegraphics[width=0.9\textwidth]{./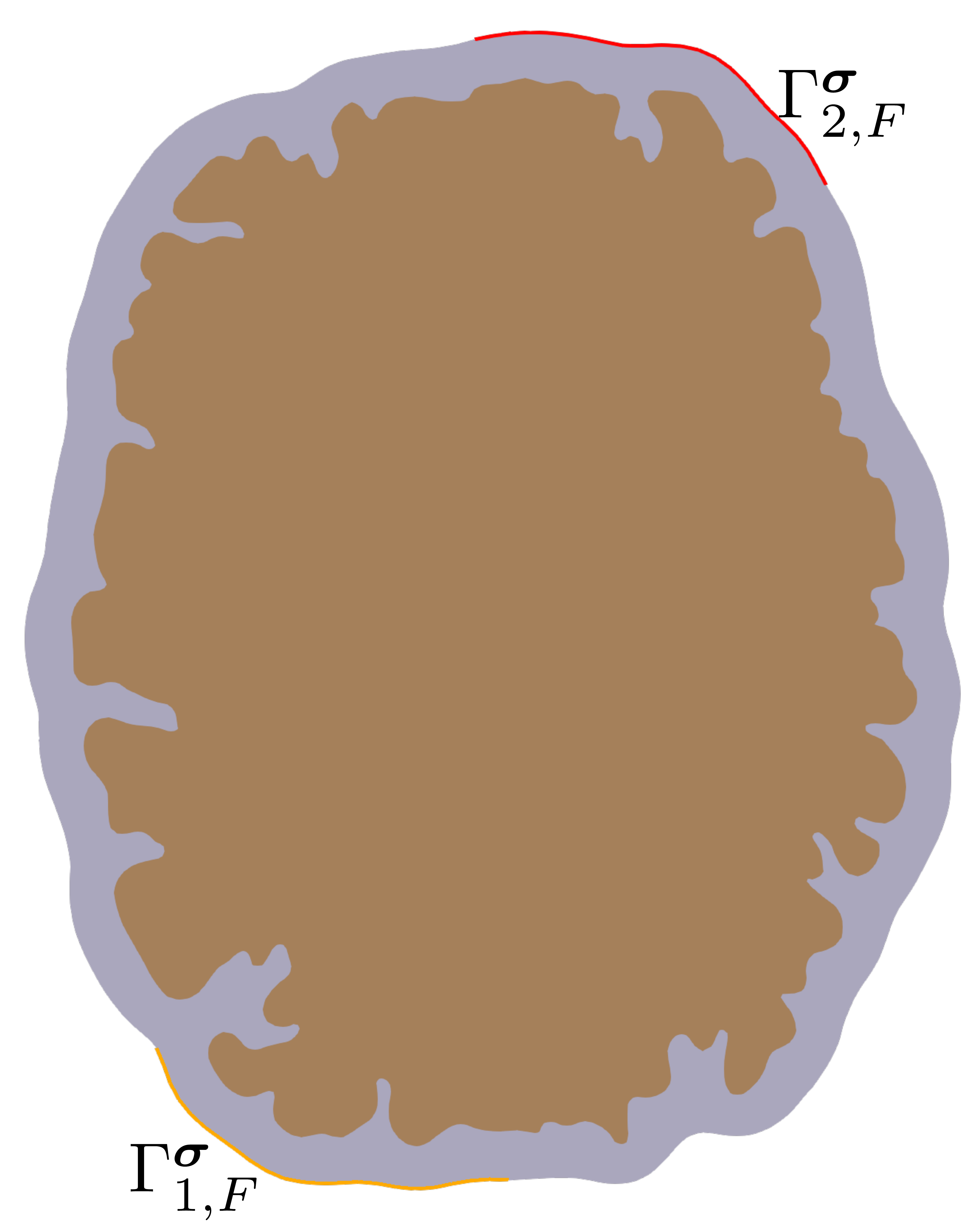}
  \end{minipage}
  \hfill\begin{minipage}{0.45\textwidth}
    \small{
    \begin{tabular}{c|ccc||cc}
      \hline
      $h\left[\text{mm}\right]$ & $\lvert \bH_{\epsilon, h}\rvert$ & $\lvert S_h \rvert$ & $\cRF$ & $\cRD$ & $\cRC$\\
      \hline
4.72     & 5114   & 134  & 65 & 369 & 82 \\
3.21     & 12919  & 266  & 77 & 445 & 98 \\
1.93     & 41545  & 530  & 75 & 473 & 104\\
1.05     & 134980 & 1066 & 81 & 513 & 114\\
0.53     & 482371 & 2118 & 83 & 525 & 119\\
0.27     & 1840764& 4234 & 82 & 521 & 119\\
      \hline
    \end{tabular}
    }
  \end{minipage}\\
  \qquad\begin{minipage}{0.35\textwidth}
    \includegraphics[width=\textwidth]{./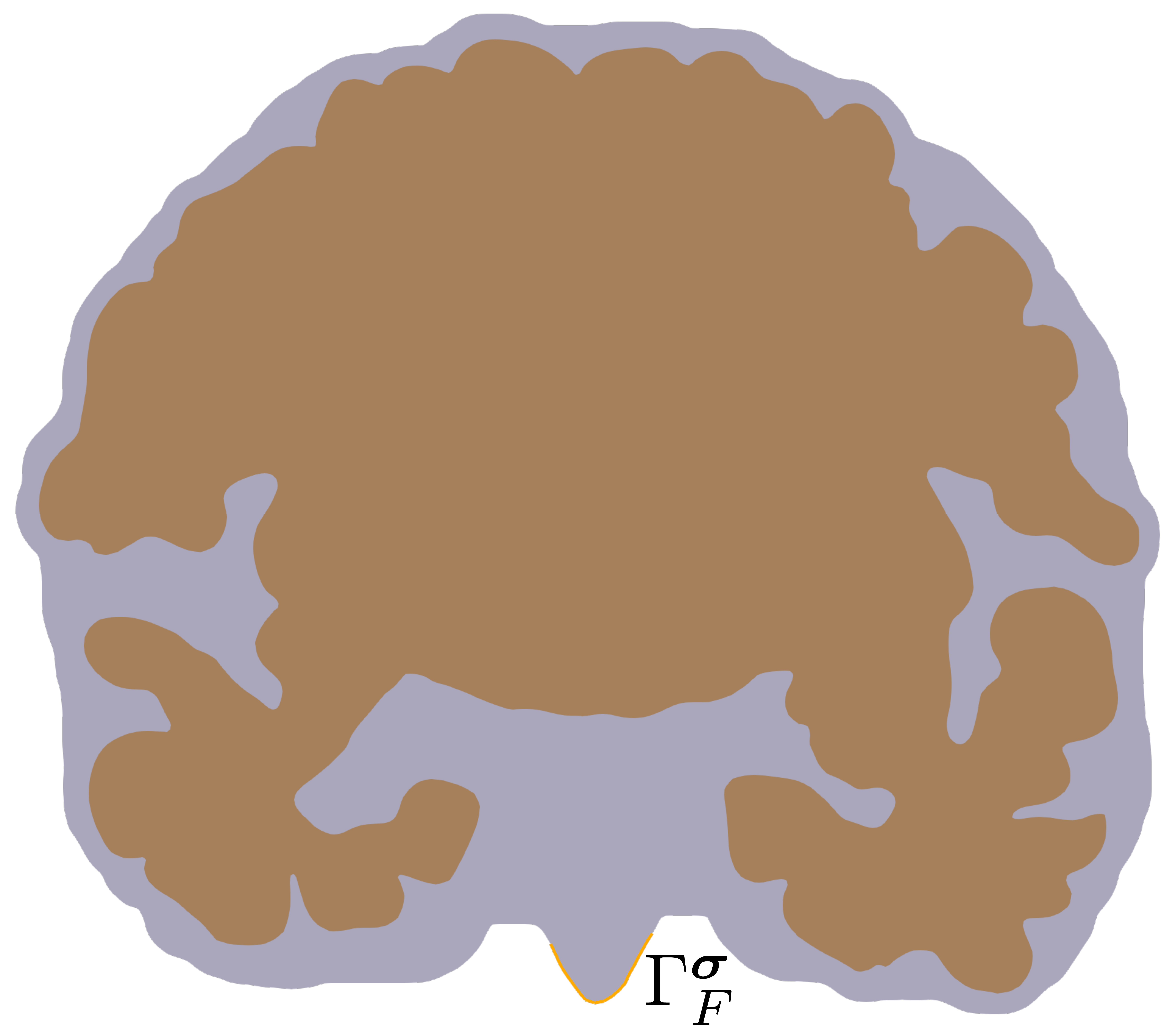}
  \end{minipage}
  \hfill\begin{minipage}{0.45\textwidth}
    \small{
    \begin{tabular}{c|ccc||cc}
      \hline
      $h\left[\text{mm}\right]$ & $\lvert \bH_{\epsilon, h}\rvert$ & $\lvert S_h \rvert$ & $\cRF$ & $\cRD$ & $\cRC$\\
      \hline
2.46 & 11681   & 178   & 81 & 413 & 101\\
1.67 & 36677   & 355   & 85 & 458 & 113\\
0.86 & 132775  & 710   & 91 & 494 & 126\\
0.44 & 496680  & 1420  & 91 & 507 & 131\\
0.21 & 1895459 & 2838  & 93 & 511 & 134\\
      \hline
    \end{tabular}
    }
  \end{minipage}  
  %
  \caption{Idealized brain sized geometry. The brain is enclosed
    in a water-filled fluid space. For horizontal slice (top) $\lvert \Sigma \rvert \approx 690\text{mm}$,
      $\lvert \Gamma^{\bu}_F \rvert \approx 391\text{mm}$,
    $\lvert \Gamma^{\bsigma}_F \rvert \approx 140\text{mm}$ while
    in coronal slice (bottom)
    $\lvert \Sigma \rvert \approx 825\text{mm}$, $\lvert \Gamma^{\bu}_F \rvert \approx 440\text{mm}$,
    $\lvert \Gamma^{\bsigma}_F \rvert \approx 24\text{mm}$. The outer boundary of the fluid
    space is assumed impermeable except for orange and red segments that form $\Gamma^{\bsigma}_F$.
    Gradually refined meshes
    of the geometry are generated with mesh size in $\text{mm}$ given in the $h$ column
    of the table. The dimensionalities of the corresponding finite element
    solution space based on $\text{TH}_1$ elements are shown together
    with the size of the trace space $S_h$, cf. Section~\ref{sec:discrete_precond}.
    The fractional preconditioner \eqref{eq:robust_precond} leads to bounded iterations, 
    and also fewer iterations are required compared to the simple preconditioners
    from Example \ref{ex:naive}. 
  }
  \label{tab:brain}
\end{figure}

Having discretized the system by $\text{TH}_1$ elements, the flow problem
is solved by a preconditioned MinRes solver starting from a 0
initial vector with relative tolerance of $10^{-8}$. As the preconditioner \eqref{eq:robust_precond} is used,
where the fractional term reads
$\mu^{-1}(-\Delta_{\Sigma}+I_{\Sigma})^{-1/2}$, cf. Section \ref{sec:num_robust_neumann}.
Note that, while $\Sigma$ is now a closed surface, the fractional operator is well
defined (and invertible) as the spectrum of the eigenvalue problem \eqref{eq:eigv_neumann} is
positive due to the $H^1$-inner product used in the definition.

The number of iterations required for convergence is tabulated in Figure \ref{tab:brain}, right. The iterations
are bounded in mesh size and (while the interface is more complex and the interface
problem much larger) are in fact comparable to those in the simple
setup of Example \ref{ex:naive} and Section \ref{sec:num_robust}.  We note
that the fractional preconditioner leads to faster convergence
of the MinRes solver especially compared to the simple
block-diagonal preconditioner $\cRD$, see Example \ref{ex:naive}.

The resulting flow and pressure fields for the two sets of simulations 
are shown in Figure \ref{fig:brain}, exhibiting localization of pressure, permeating into the porous domain following the directions dictated by the brain displacement. 

\begin{figure}[t!]
  \includegraphics[width=0.48\textwidth]{./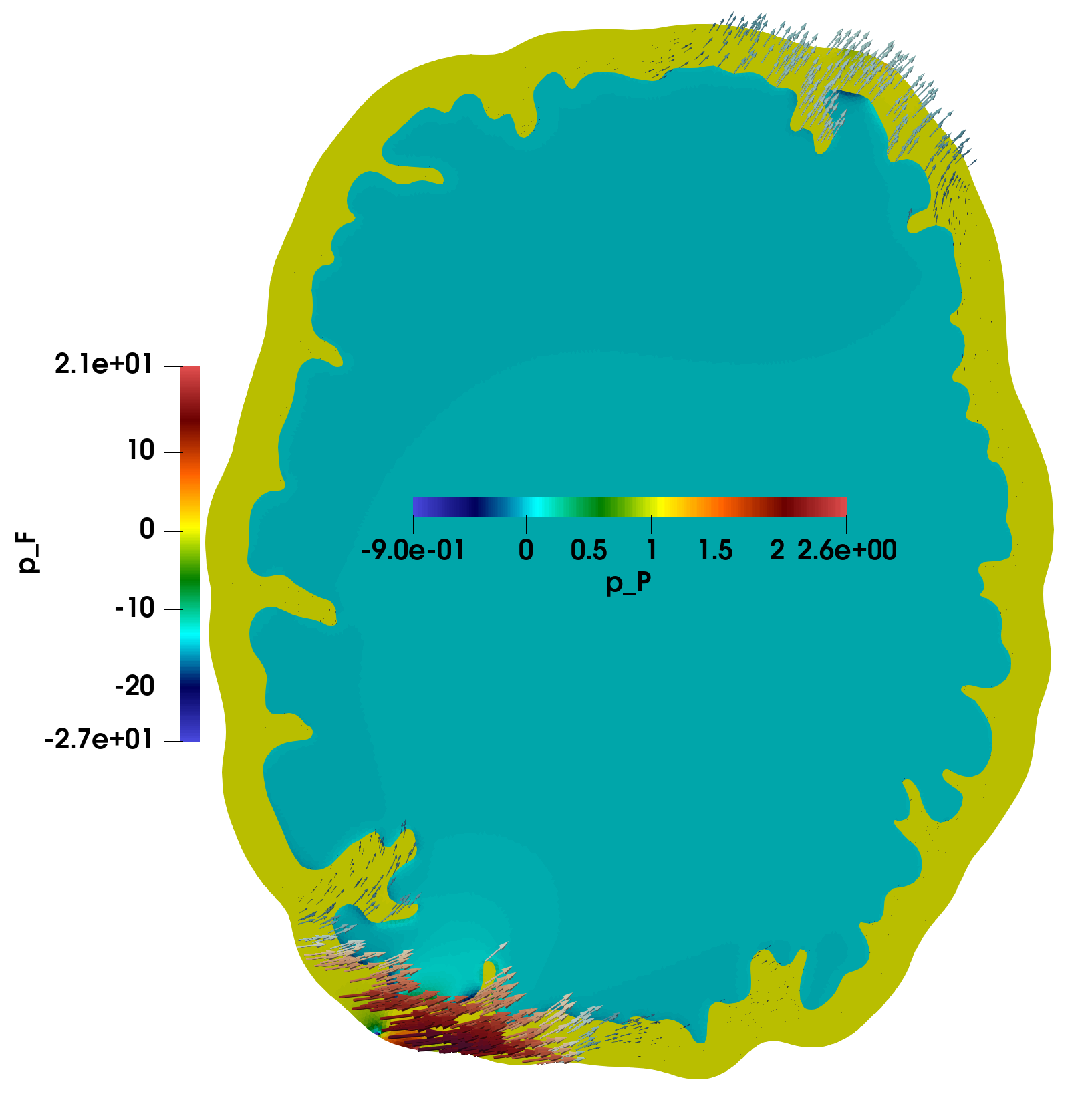}
  \includegraphics[width=0.48\textwidth]{./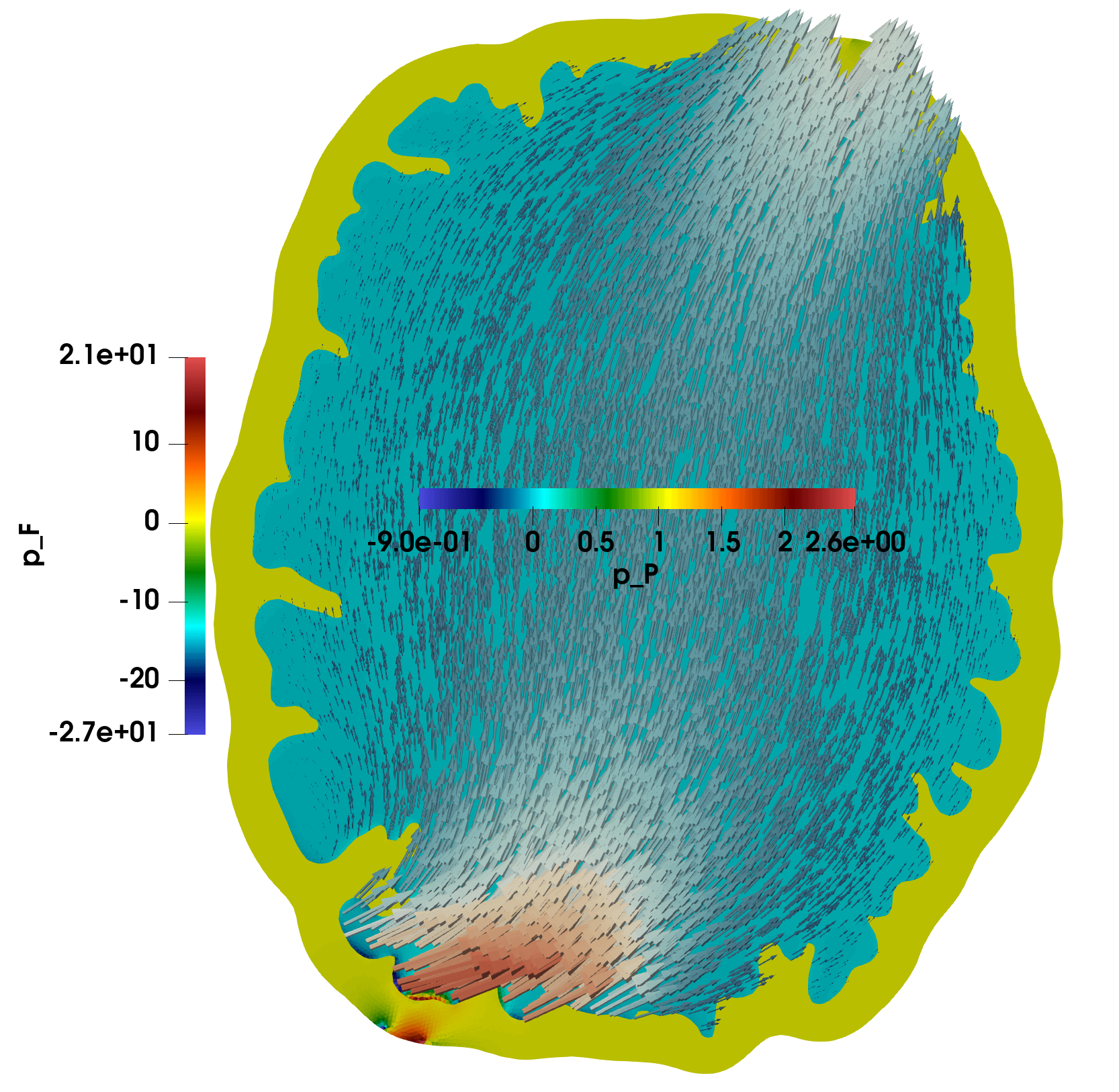}\\
  \includegraphics[width=0.48\textwidth]{./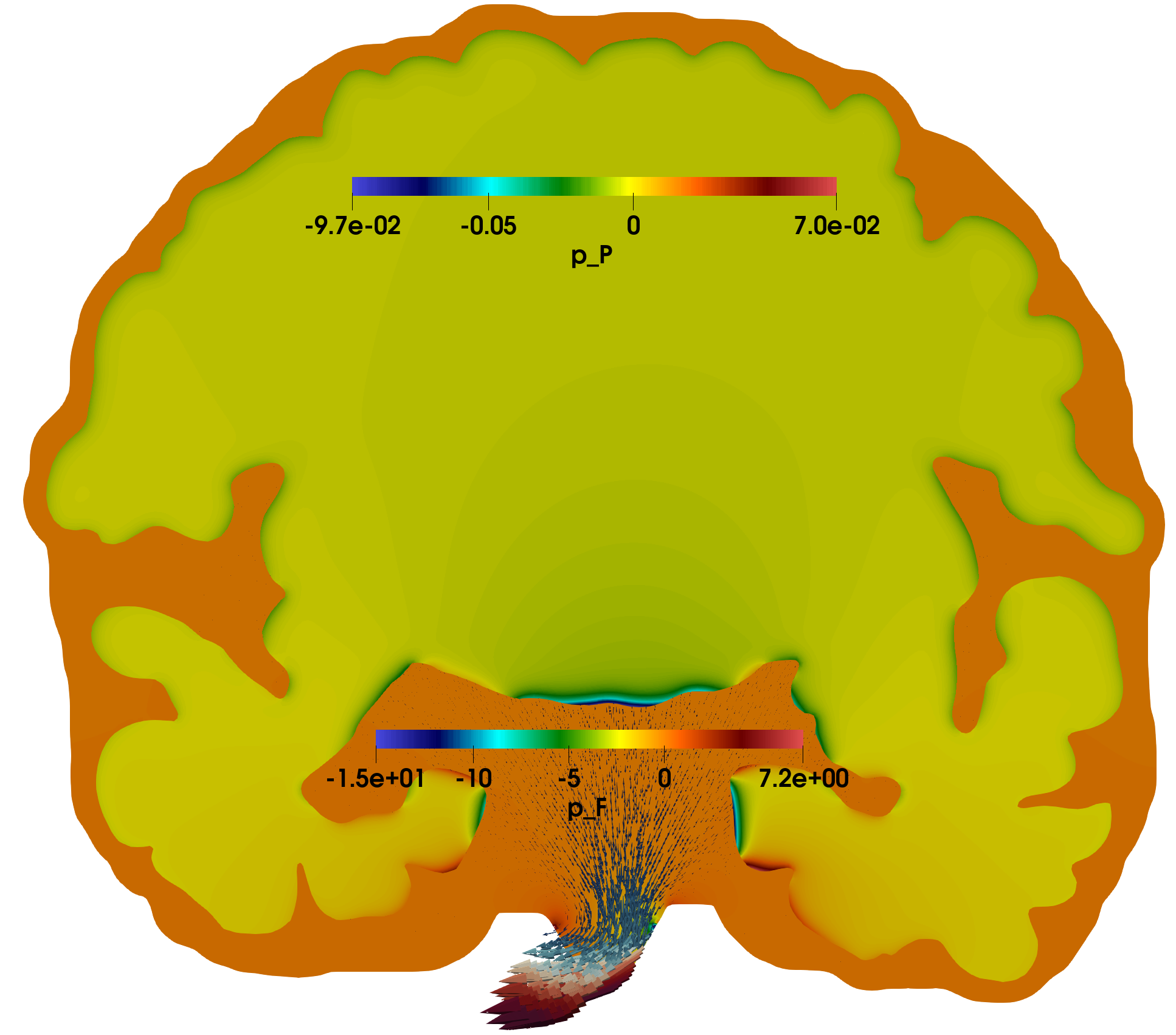}
  \includegraphics[width=0.48\textwidth]{./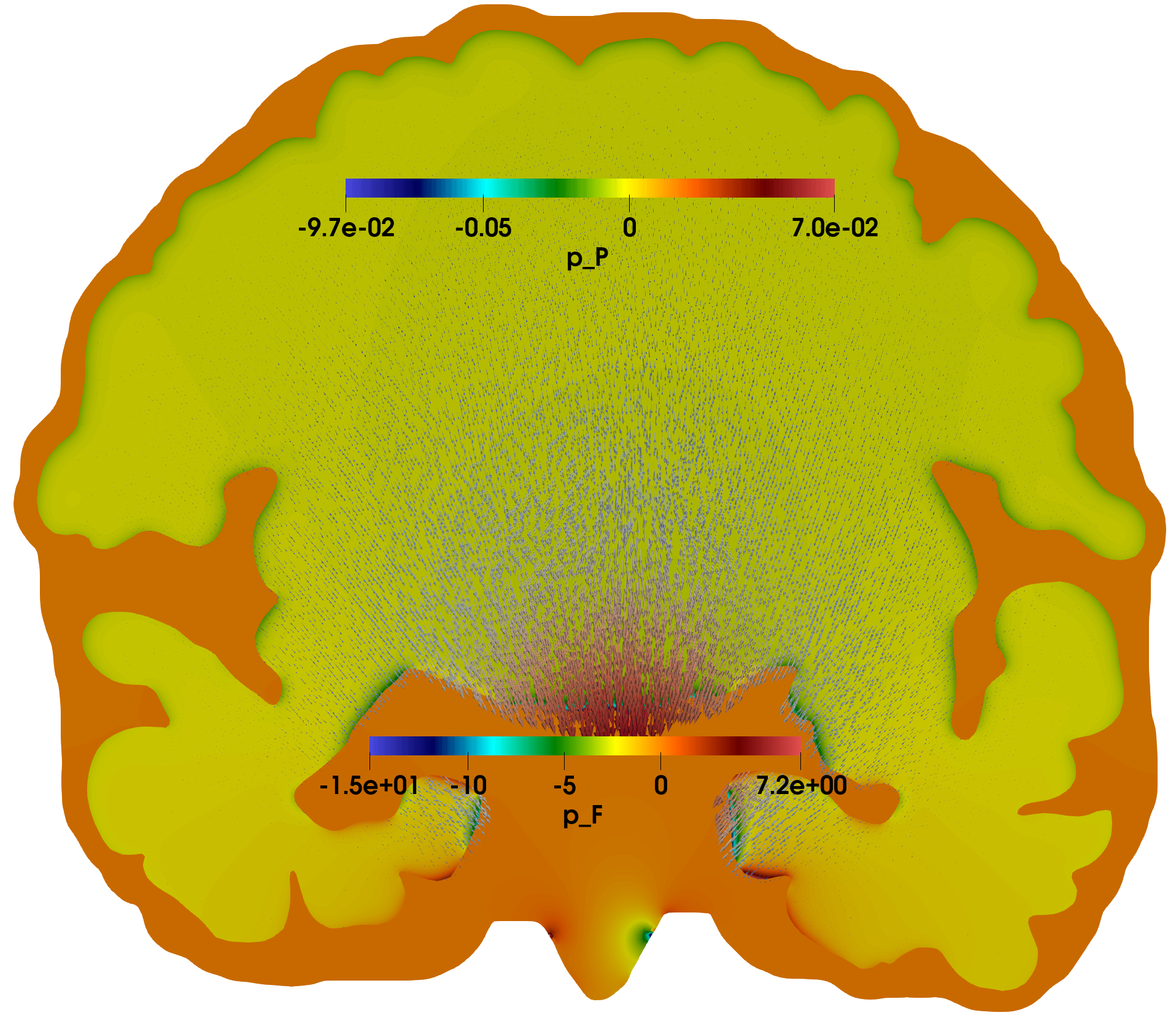}\\
  \vspace{-10pt}
  \caption{
    Interfacial flow in an idealized geometry. The traction boundary conditions in the top right and bottom left corners (horizontal
    slices), respectively at the bottom (coronal slices), induce the fluid flow (left) and the brain displacement (right).
  }
  \label{fig:brain}
\end{figure}

\bibliography{bhkmr-bib}

\bigskip 

\appendix

\section{TH$_k$ and CR finite element families}\label{section:TH-CR}
We
denote by $\{\cT_{h}\}_{h>0}$ a shape-regular family of partitions of 
$\bar\Omega$, conformed by tetrahedra (or triangles 
in 2D) $K$ of diameter $h_K$, with mesh size
$h:=\max\{h_K:\; K\in\cT_{h}\}$, and denote $\cT_{h}^F$ and $\cT_{h}^P$ the restrictions of the mesh elements to the subdomains $\Omega_F$ and $\Omega_P$, respectively. Similarly, 
by $\cE_h^F$ and  $\cE_h^P$ we will denote the restrictions to the mesh facets (edges in 2D) to the Stokes and Biot subdomains, respectively.  
We assume that the two partitions match at the interface. 
 Given an integer $k\ge1$ and a subset
$S$ of $\mathbb{R}^d$, $d=2,3$, by $\mathbb{P}_k(S)$ we will denote the 
 space of polynomial functions defined locally in $S$ and being of total degree up to $k$.
The methods that we use are based on the generalized Taylor-Hood \cite{taylor73} and Crouzeix-Raviart \cite{crouzeix73} finite element families that give 
an overall $k+1$ order of convergence 
\begin{align*}
\mathrm{TH}_k&\begin{cases} \vec{\bV}_h  =\{(\bv_h,\bw_h)\! \in\!\vec{\bV}\!:  \bv_h|_{K} \in \mathbb{P}_{k+1}(K)^d, \bw_h|_{L} \in \mathbb{P}_{k+1}(L)^d, \forall K\in \cT^F_h,L\in\cT_h^P\}, \\
	\vec{Q}_h =\{\vec{q}_{h}\in \vec{Q} \cap [C(\Omega_F)\times C(\Omega_P)\times L^2(\Omega_P)]: \\
\qquad \  q_{F,h}|_K\in\mathbb{P}_{k}(K),	\psi_h|_L\in\mathbb{P}_k(L), q_{P,h}|_L\in\mathbb{P}_{k+1}(L),	  \forall K\in\cT^F_h,L\in\cT_h^P\},\end{cases}\\
\mathrm{CR}&\begin{cases} \vec{\bV}_h  =\{(\bv_h,\bw_h)\! \in\!\bL^2(\Omega_F)\times\bL^2(\Omega_P)\!: \bv_h|_{K} \in \mathbb{P}_{1}(K)^d, \bw_h|_{L} \in \mathbb{P}_{1}(L)^d, \\
\quad \quad \int_e \jump{\bv_h\cdot\nn_e} = 0, \ \int_\ell \jump{\bw_h\cdot\nn_\ell} = 0 \ 
\forall e\subset \partial K, \ell\subset \partial L, \, K\in \cT^F_h,L\in\cT_h^P\}, \\
	\vec{Q}_h =\{\vec{q}_{h}\in \vec{Q} \cap [L^2(\Omega_F)\times L^2(\Omega_P)\times C(\Omega_P)]: q_{F,h}|_K\in\mathbb{P}_{0}(K),	  \\
\quad  \quad \psi_h|_L\in\mathbb{P}_0(L), q_{P,h}|_L\in\mathbb{P}_{1}(L),  
\forall K\in\cT^F_h\!,L\in\cT_h^P\}.\end{cases}
\end{align*}

\section{CR discretization for Biot system}\label{sec:CR}
We investigate numerically the stability of the three-field (total pressure)
formulation of Biot equations discretized by the CR family. Assuming momentarily that
$\partial\Omega_P=\Gamma^{\bd}_P$ the discrete weak problem reads:
Find $(\bd_h, p_{P,h}, \varphi_h)\in \bW_h\times Q^P_h\times Z_h$ such
that for all $(\bw_h, q_{P, h}, Z_h)\in \bW_h\times Q^P_h\times Z_h$ it holds that 
\begin{equation}\label{eq:CR-biot}
  \begin{aligned}
  \int_{\Omega_P} 2\mu_s \beps(\bd_h):\beps(\bw_h) + \int_{\cE_h^P}
  \frac{2\mu_f}{h}\jump{\bd_h\cdot\nn} \jump{\bw_h\cdot\nn}
  -\int_{\Omega_P}\varphi_h\vdiv\bw_h &= \int_{\Omega_P}\boldsymbol{f}\cdot\bw_h,
  \\
  -\int_{\Omega_P}\frac{\alpha^2}{\lambda}p_{P,h} q_{P,h} - \int_{\Omega_P}\frac{\kappa}{\mu_f}\nabla p_{P,h}\cdot\nabla q_{P,h} + \int_{\Omega_P}\frac{\alpha}{\lambda}\varphi_h q_{P,h}&= 0,\\
  %
  \int_{\Omega_P}\psi_h\left(-\vdiv\bd_h + \frac{\alpha}{\lambda}p_{P,h} -\frac{1}{\lambda}\varphi_h\right) &= 0.
\end{aligned}
\end{equation}
Note that the momentum balance equation includes a stabilization as described in \cite{burman05}.

\begin{figure}[t!]
  \includegraphics[width=\textwidth]{./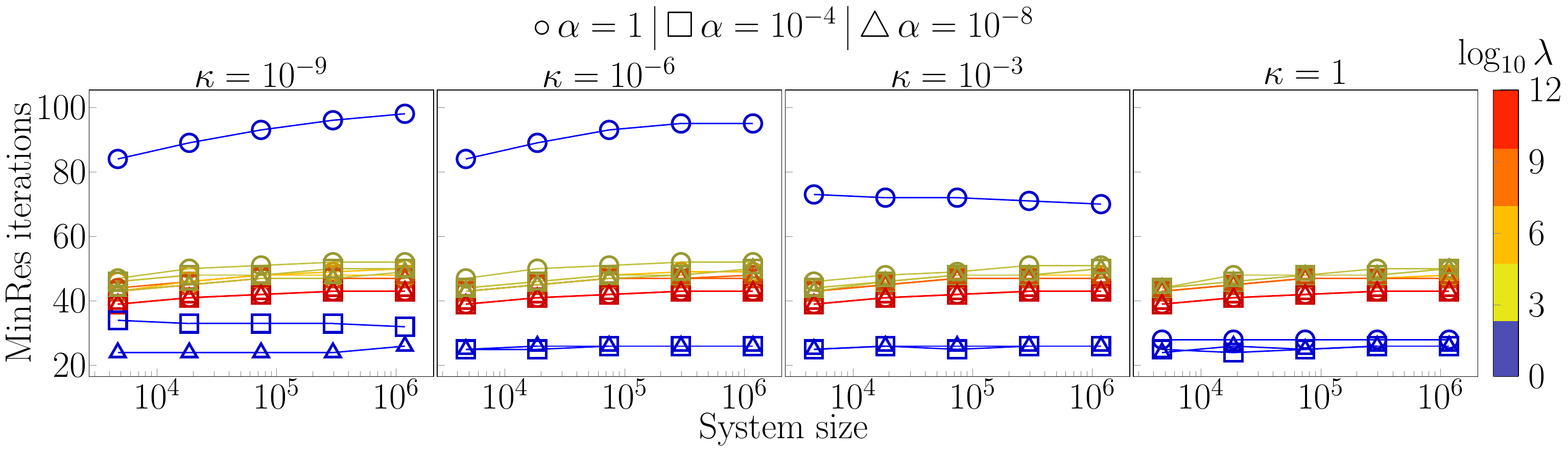}  
  \vspace{-10pt}  
  \caption{
    Total pressure Biot formulation \eqref{eq:CR-biot} with preconditioner from \cite{lee17}, and
  using a  discretization by $\text{CR}$ family.
  }
  \label{fig:biot3_CR}
\end{figure}

In Figure \ref{fig:biot3_CR} we report the number of iterations required for convergence of MinRes solver
started from random initial vector until reducing the preconditioned residual
norm by a factor of $10^{8}$. Here the preconditioner analyzed in \cite{lee17}
(that is, the preconditioner is formed by the second, fourth and final blocks of
the operator $\cRD$ in \eqref{eq:precond_diagonal}) is
used. We remark that in the numerical experiment $\Omega_P=(0, 1)^2$, and  
$\lvert \Gamma^{p_P}_P\rvert\cdot\lvert \Gamma^{\bd}_P\rvert>0$.
We observe that the iterations are bounded with respect to the mesh size as well as to
 variations in material parameters.

\begin{figure}[t!]
  \includegraphics[width=\textwidth]{./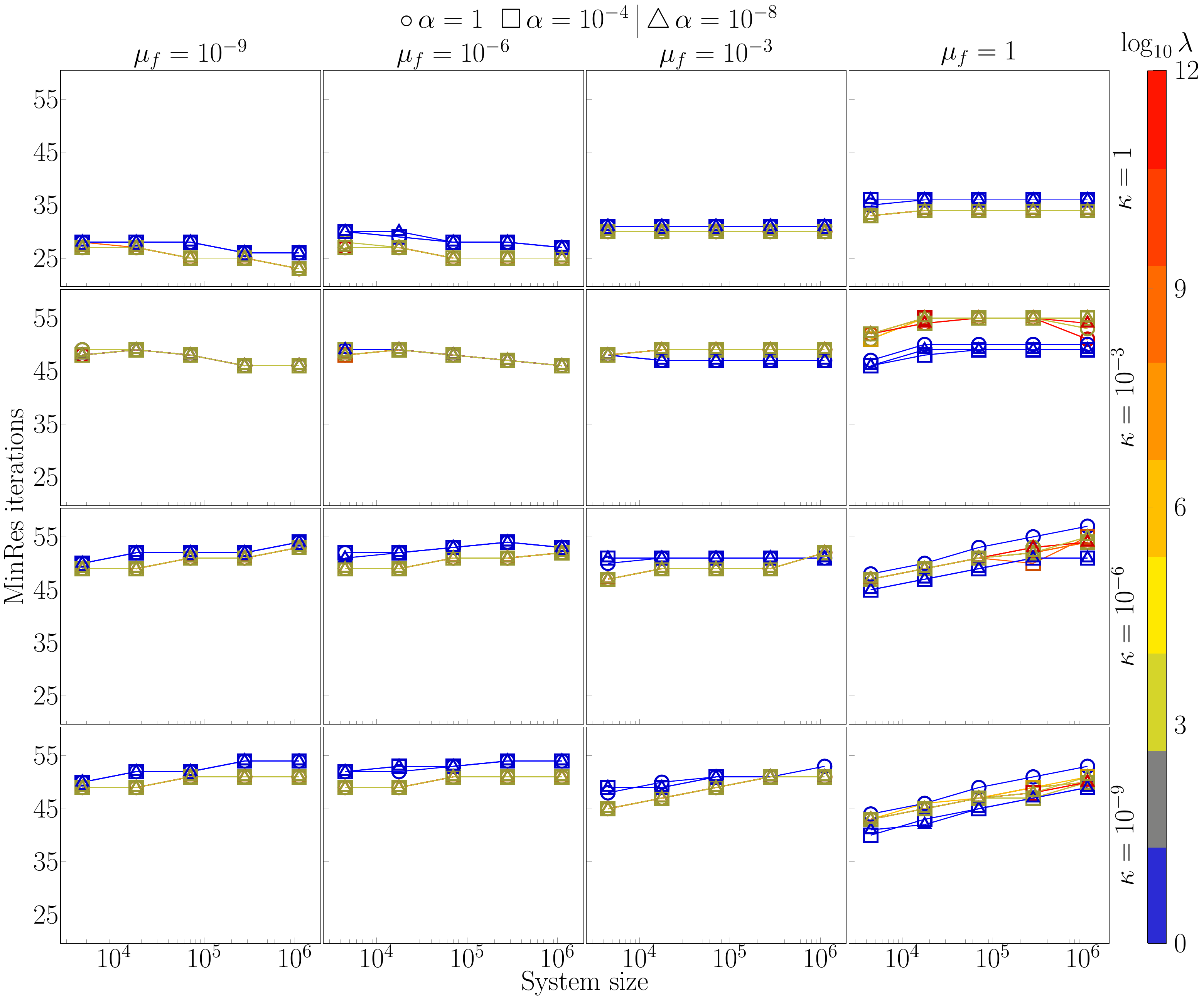}
  \vspace{-10pt}  
  \caption{
    Performance of Biot-Stokes preconditioner \eqref{eq:robust_precond}.
    Geometry of Example \ref{ex:naive} is used with $\Sigma$ intersecting $\Gamma^{\bsigma}_F$ and
    $\Gamma^{p_P}_P$. The fractional operator is changed to $\mu^{-1}(-\Delta_{\Sigma}+I_{\Sigma})^{-1/2}$.
    We set $\mu_s=1$, $C_0=0$, $\gamma=10^2$. The parameters $\mu_f$, $\kappa$, $\lambda$, $\alpha$ are varied. Values
    of  the Biot-Willis coefficient are indicated by markers.
    The discretization is done using the CR family.    
  }
  \label{fig:iters_TH_loop_alpha_C0_0}
\end{figure}


\end{document}